\newlist{myQuoteEnumerate}{enumerate}{2}
\setlist[myQuoteEnumerate,1]{label=(\alph*)}
\setlist[myQuoteEnumerate,2]{label=(\alph*)}
\definecolor{awesome}{rgb}{1.0, 0.13, 0.32}
\definecolor{airforceblue}{rgb}{0.36, 0.54, 0.66}
\newtheorem{thm}{Theorem}[section]
\newtheorem{thmm}{Theorem}[section]
\newtheorem{mthmm}{Main Theorem}[section]
\newtheorem{lem}{Lemma}[section]
\newtheorem{defi}{Definition}[section]
\newcommand{\storus}{\mathbb{T}}
\newcommand{\inv}{\operatorname{inv}}
\definecolor{rulecolor}{RGB}{0,71,171}
\definecolor{tableheadcolor}{gray}{0.92}
\newcommand{\cdisk}{\overline{\mathbb{D}}}
\newcommand{\embd}{\operatorname{emb}}
\numberwithin{equation}{section}
\newcommand{\nid}{\noindent}
\newcommand{\disth}{\operatorname{dist}_{\textsc{h}}}
\newcommand{\hyp}{\operatorname{\textrm{hyp}}(A_r, c)}
\newcommand{\sh}{\operatorname{sh}}
\newcommand\quotient[2]{
        \mathchoice
            {
                \text{\raise1ex\hbox{$#1$}\Big/\lower1ex\hbox{$#2$}}%
            }
            {
                #1\,/\,#2
            }
            {
                #1\,/\,#2
            }
            {
                #1\,/\,#2
            }
    }
\definecolor{aurometalsaurus}{rgb}{0.43, 0.5, 0.5}
\definecolor{darkjunglegreen}{rgb}{0.1, 0.14, 0.13}
\definecolor{coolblack}{rgb}{0.0, 0.18, 0.39}
\definecolor{cobalt}{rgb}{0.0, 0.28, 0.67}
\newcommand{\ep}{\epsilon}
\title{Holomorphic motions for unicritical correspondences}
\author{Carlos Siqueira}
\address{Departamento de Matem\'atica, ICMC-USP.  S\~ao Carlos -- SP, Brazil. Caixa Postal 668. CEP 13560-970.}
\curraddr[C.~Siqueira]{{\sc Imperial College London. Huxley Building, South Kensington Campus.
London SW7 2AZ, UK.}}
\email[C.~Siqueira]{carlos@icmc.usp.br, c.siqueira-lima15@imperial.ac.uk.}
\author{Daniel Smania}
\email[D.~Smania]{smania@icmc.usp.br.}
\date{} 
\begin{document}

\hypersetup{linkcolor=cobalt}

\begin{abstract} We study quasiconformal deformations and mixing properties of hyperbolic sets in the family of holomorphic correspondences $z^{r} +c,$ where $r >1$ is rational. Julia sets in this family are projections of Julia sets of holomorphic maps on  $\mathbb{C}^2,$ which are skew-products when $r$ is integer, and solenoids when $r$ is non-integer and $c$ is close to zero. Every hyperbolic Julia set in $\mathbb{C}^2$ moves holomorphically. The projection determines a branched holomorphic motion  with  local (and sometimes global) parameterisations of the plane Julia set by quasiconformal curves.  \end{abstract}

\maketitle

\keywords{MSC-class 2010:  37F15 (Primary) 32H50,  37F05 (Secondary).}


 \section{Introduction: definitions and main results} 
 
 The  quadratic family $z^2+c$ is one of the simplest examples of nonlinear dynamical systems which  exhibits a full spectrum of dynamical behaviour.     Other families $z^d +c$ have received more attention over the last years, with the remarkable discovery made by  \cite{LAS}  that  for Lebesgue almost every $c\in \mathbb{C},$ the map $z^d +c$ is either hyperbolic or infinitely renormalizable.
This paper is concerned with the study of  the family \begin{equation}\label{X}\textbf{f}_c(z)=z^{\beta} +c,\end{equation} where $c\in \mathbb{C}$, $\beta > 1$ is rational and $z^{\beta}=\exp \beta\, \textbf{log}(z).$

If $\beta=p/q$ in $(\ref{X})$ is presented in its reduced form, then every $z\in \mathbb{C}^*$ has exactly $q$ images under $\mathbf{f}_c.$ Hence $\mathbf{f}_c$ is a \emph{holomorphic correspondence,} i.e., a relation $z\mapsto w$ on the Riemann sphere determined implicitly by a complex polynomial equation $p(z,w)=0,$ which in this case is $(w-c)^q=z^p.$ We are going to consider the more general case where $q$ and $p$ are not necessarily relatively prime, with $p>q\geq 2.$ This is necessary, not only for the sake of generality, but also because $(w-c)^q=z^{p}$ is a conformal extension of the unimodal map \begin{equation}u_c(x)=|x|^{\beta} +c, \end{equation} when $\beta=p/q$ and both $p$ and $q$ are even (in the sense that the graph of $u_c$ is contained in the Riemann surface determined by the correspondence). Several problems in real dynamics have been successfully solved with the help of complex extensions. Indeed, it was only recently that Rempe and van Strien proved that the topological entropy of $f_a(x)=a\sin (\pi x)$ is a monotone function of $a$ using the the fact that $f_a$ extends to an entire transcendental map on the complex plane  \cite{RempeStrien}.  For  non-integer  exponents  like $\beta,$ it has been
 conjectured (for at least 30 years) that the topological entropy defines a monotone function $t\mapsto h_{top}(u_t).$

\subsection{Quasiconformal deformations}  Every finitely generated Kleinian group or rational function is an example of holomorphic correspondence.  Some of  the most interesting examples appear when a single polynomial equation $p(z,w)=0$ describes the \emph{mating} of Kleinian group with a polynomial map.  In  \cite{Bullett1994} we have  a family of matings parameterized on a set which (conjecturally) is homeomorphic to the Mandelbrot set of the quadratic family $z^2+c.$  In this direction, very recent results reveal a hybrid property  which is also present in $z^{\beta}+c.$  However, the main concern of this first paper on the dynamics of $z^{\beta}+c$ is to develop analytical tools to study quasiconformal conjugacy classes.

 Kleinian groups which are rigid as groups may be deformable when considered as holomorphic correspondences, as described by \cite{BP99}, where certain circle-packing Kleinian groups are proved to have families of deformations as quasicircle-packing correspondences. A similar phenomenon holds for quasifuchsian groups: {\it if $\Gamma$ is a finitely generated Kleinian group whose limit set is a Jordan curve, then $\Gamma$ is  quasiconformally conjugate to a Fuchsian group. Therefore, the limit set of $\Gamma$ is {quasicircle}}. For polynomial maps, the same principle applies: the Julia set of $z^q+c$ is a quasicircle for every $c$ sufficiently close to the origin. The Julia set $J(z^{\beta}+c)$ of the correspondence is the closure of repelling periodic orbits (\S \ref{obsec}).  In this paper we show: 
  
  \begin{thm}[Theorem \ref{kks}]\label{asdf} If $\beta \in \mathbb{Q}-\mathbb{Z}$ and $\beta > 1,$ then for every $c$ in a neighbourhood $U\subset \mathbb{C}$ of zero, the Julia set $J(z^{\beta}+c)$ can be  described as a  union of curves \begin{equation}\gamma: \mathbb{R} \to J(z^{\beta}+c)\end{equation} which are quasiconformal deformations of the universal cover of $\mathbb{S}^1.$

  \end{thm}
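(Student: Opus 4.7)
The plan is to lift the correspondence $f_{c}$ to a two-variable holomorphic map $F_{c}$ on $\mathbb{C}^{2}$ whose Julia set $\Sigma_{c}=J(F_{c})$ is the solenoid indicated in the abstract, exploit the explicit geometric structure of the reference solenoid $\Sigma_{0}$, and transport that structure to nearby parameters via the holomorphic motion of hyperbolic Julia sets announced earlier in the paper.

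At the central parameter $c=0$ the correspondence $(w)^{q}=z^{p}$ is radially homogeneous, so $\Sigma_{0}$ is a solenoid fibering over the unit circle $\mathbb{S}^{1}\subset\mathbb{C}$ under the first-coordinate projection $\pi(z,w)=z$. The arc-components of $\Sigma_{0}$ are the orbits of a canonical $\mathbb{R}$-action coming from the unwrapping of $\mathbb{S}^{1}$, and each such component $\ell$ carries a parametrisation $\alpha_{\ell}\colon\mathbb{R}\to\ell$ for which $\pi\circ\alpha_{\ell}$ is a universal cover $t\mapsto e^{2\pi it}$ of $\mathbb{S}^{1}$. Because $\Sigma_{0}$ is hyperbolic, the holomorphic motion theorem for correspondences provides, for $c$ in a small neighbourhood $U$ of the origin, a map $H\colon U\times\Sigma_{0}\to\mathbb{C}^{2}$ holomorphic in $c$, injective on each slice, with $H(0,\cdot)=\mathrm{id}$ and $H(c,\Sigma_{0})=\Sigma_{c}$. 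By the Ma\~n\'e--Sad--Sullivan $\lambda$-lemma each slice $H(c,\cdot)$ extends to a quasiconformal homeomorphism of a bidisc containing $\Sigma_{0}$.

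The curves of the theorem are then obtained by transporting arc-components and projecting: for every arc-component $\ell$ of $\Sigma_{0}$ set
\[\gamma_{c}:=\pi\circ H(c,\cdot)\circ\alpha_{\ell}\colon\mathbb{R}\longrightarrow J(z^{\beta}+c).\]
Letting $\ell$ range over all arc-components sweeps out $J(z^{\beta}+c)$ because $\pi(\Sigma_{c})=J(z^{\beta}+c)$, and at $c=0$ we recover $\gamma_{0}(t)=e^{2\pi it}$, so the family $\{\gamma_{c}\}_{c\in U}$ is by construction a deformation of the universal cover of $\mathbb{S}^{1}$.

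The main obstacle is upgrading this deformation to a \emph{quasiconformal} one, i.e., writing $\gamma_{c}=\phi_{c}\circ\gamma_{0}$ for a planar quasiconformal homeomorphism $\phi_{c}$. The route I would follow is to show that in a complex tubular neighbourhood of each arc-component $\ell\subset\Sigma_{0}$ the projection $\pi$ is a biholomorphism onto an annular neighbourhood of $\mathbb{S}^{1}$ in $\mathbb{C}$, so that $\phi_{c}:=\pi\circ H(c,\cdot)\circ\pi^{-1}$ is a well-defined quasiconformal map between such annuli. The restriction to $c$ near $0$ is precisely what keeps $\Sigma_{c}$ and the $H$-image of the tube away from the critical value locus of $\pi$, which is what controls distortion and prevents the planar curves from self-intersecting.
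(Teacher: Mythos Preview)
Your overall architecture matches the paper exactly: lift to the bundle map in $\mathbb{C}^{2}$, identify $\Sigma_{0}=\mathcal{B}(\mathbb{S}^{1})$ as a solenoid whose arc-components project via the universal cover $t\mapsto e^{it}$, transport by the holomorphic motion $H=h$ of Theorem~\ref{giocco}, and define $\gamma_{c}=\pi\circ h_{c}\circ\alpha_{\ell}$. This is precisely formula~\eqref{lfv}.

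The gap is in the quasiconformality step. Two of your assertions are not correct as stated. First, the Ma\~n\'e--Sad--Sullivan $\lambda$-lemma is a one-complex-dimensional result; there is no version that extends a holomorphic motion of a subset of $\mathbb{C}^{2}$ to a quasiconformal self-map of a bidisc, and even if there were, composing a four-real-dimensional quasiconformal map with $\pi$ and a section would not automatically yield a planar quasiconformal map. Second, the projection $\pi(z,w)=z$ is a linear submersion with no critical points, so the phrase ``away from the critical value locus of $\pi$'' is empty and cannot be what prevents self-intersection of the projected curves.

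What the paper does instead is bypass $\mathbb{C}^{2}$-quasiconformality entirely and produce \emph{planar} holomorphic motions directly. The relevant notion is that of a \emph{shadow}: a connected compact sectorial piece $L\in\operatorname{sh}(\Lambda,\theta)$. Theorem~\ref{ghf} shows, by analytically continuing branches of $\mathbf{f}_{c}$ over the simply connected complement of $L$, that the bundle projection restricted to any connected compact $K\subset\mathcal{B}(\Lambda)$ over such an $L$ is a homeomorphism. Consequently, for each embedding $\phi\in\operatorname{emb}(L)$ the map $g(c,z)=\pi\circ h_{c}\circ\phi(z)$ is \emph{injective} in $z$, hence a genuine holomorphic motion of $L\subset\mathbb{C}$. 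Quasiconformality with a uniform constant $K_{r}$ then follows from the one-variable Schwarz lemma applied to the cross-ratio map into $\hat{\mathbb{C}}\setminus\{0,1,\infty\}$ (Theorem~\ref{ofd}). Since the paper's definition of ``$K$-quasiconformal deformation of the universal cover'' is local (it asks only for a $K$-quasiconformal $\varphi:e^{iI}\to L$ on each interval), this local-shadow argument is exactly what is needed. Your missing ingredient is therefore the injectivity lemma for shadows (Theorem~\ref{ghf}), after which the classical one-dimensional $\lambda$-lemma suffices and no $\mathbb{C}^{2}$ extension is required.
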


   \begin{figure}[H]\label{jdf}
\centering
\includegraphics[scale=0.2]{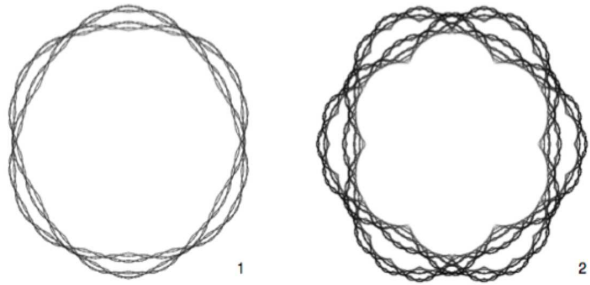}
{\caption{\it The Julia sets of $(w-c)^2=z^6$  for $(1)$ $c=0.2i$  and $(2)$  $c=0.35i.$ }}
\end{figure}

\subsection{Hyperbolic sets} By a hyperbolic set of $(w-c)^q=z^p$ we mean any compact subset $\Lambda$ of the plane such that $(i)$ every point of $\Lambda$ has at least one image in $\Lambda$  and at least one pre-image in $\Lambda$ under the correspondence;  and $(ii)$ the branches of the correspondence either expand or contract a conformal metric on $\Lambda$ (\S \ref{sect25}). Mixing properties for hyperbolic sets are discussed in \S \ref{vcb}. By Theorem \ref{jsd}, topologically  mixing hyperbolic sets often occur in  two special types: Julia sets $J_c$, defined as the closure of repelling cycles of $(w-c)^q=z^p;$  and dual Julia sets $J_c^*$, defined as the closure of attracting cycles.   In Figure \ref{jdf}, for example, the corresponding dual Julia sets  are Cantor sets $\{0, \ldots, p-1\}^{\mathbb{N}_0}$ clustered around the origin.  They are obtained from Conformal Iterated Function Systems induced by the restriction $\mathbf{f}_c: U \to U$  to a forward invariant neighbourhood $U$ of zero such that $\mathbf{f}_c^2(U)$ consists of $p$ disjoint disks avoiding the critical point. (This is an instance of a more general property concerning rigidity and wandering domains, currently under preparation).

 \subsection{Cantor bundles and holomorphic motions}  The standard method of obtaining holomorphic motions of hyperbolic Julia sets in families of rational maps uses the $\lambda$-Lemma of   \cite{Mane1983} after a preliminary motion of  repelling cycles. This method does not work for the family $(w-c)^q=z^p,$ but for a special type of hyperbolic sets called hyperbolic repellers
-- expanding hyperbolic sets   which are fully invariant under backward iterates -- \cite{Ochs} proved the structural  stability on the orbit space under the action of the (left) shift. However, we have hyperbolic sets which are not structurally stable in the plane.  Since ${J}_0= \mathbb{S}^1,$ Figure \ref{jdf}  gives a  preliminary  support to this fact, which becomes even more evident when we prove that $J_c$ is the projection of a solenoid.  

The proper ambient space to discuss structural stability and holomorphic motions is $\mathbb{C}^2.$ We may embed the space of forward orbits of $\mathbf{f}_0: \mathbb{S}^1 \to \mathbb{S}^1$   in $\mathbb{C}^2,$ obtaining a solenoid which is the Julia set of a holomorphic map $f_0: U\to \mathbb{C}^2$  defined on an open set $U \subset \mathbb{C}^2.$ This solenoid moves holomorphically in $\mathbb{C}^2$, and the projection of this holomorphic motion  determines a branched holomorphic motion of $J_c$ in the plane.  The  geometry  of $J_c$ will ultimately  depend on how the solenoid in $\mathbb{C}^2$ projects into $\mathbb{C}.$ For $c=0$ this projection is exceptionally simple.

Since  plane hyperbolic sets as defined in this paper are very general objects, it is surprising that under very weak assumptions such objects do in fact move holomorphically. For examples of attracting hyperbolic sets, dual Julia sets and hyperbolic Julia sets, see \S \ref{xvc}.  

 The motion in $\mathbb{C}^2$ associated to a hyperbolic set $\Lambda\subset \mathbb{C}$ is carried out with the introduction of a field of Cantor sets on $\Lambda.$ In other words, we assign a set $\mathcal{C}_z(\Lambda)$ -- which is very often a Cantor set -- at every point  $z\in \Lambda$ in a way that $\mathcal{C}_z(\Lambda)$ varies continuously on evenly covered subsets $K\subset \Lambda$ (see Theorem \ref{nossa}). The corresponding \emph{Cantor bundle} is \begin{equation}\mathcal{B}(\Lambda)= \bigcup_{z\in \Lambda} \{z\} \times \mathcal{C}_z(\Lambda). \end{equation}

    \begin{thm}[Short version of Theorem \ref{giocco}]\label{B}  Let $\Lambda_{c_0}$ be a hyperbolic set of the correspondence $(w-c_0)^{q}=z^p.$ There is a holomorphic motion based at $c_0,$ \begin{equation}h: U\times \mathcal{B}(\Lambda_{c_0}) \to \mathbb{C}^2 \end{equation} such that $\Lambda_c= \pi h_c(\mathcal{B}(\Lambda))$ is a hyperbolic set of $(w-c)^{q}=z^{p},$ for every $c\in U.$
  
  \end{thm}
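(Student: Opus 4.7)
The plan is to reduce the statement to a structural stability/holomorphic motion argument in $\mathbb{C}^2$, where the correspondence lifts to a genuine (single-valued) holomorphic map. First, I would show that the Cantor bundle $\mathcal{B}(\Lambda_{c_0})$ embeds as a compact, $f_{c_0}$-invariant, uniformly hyperbolic set $\widetilde{\Lambda}_{c_0}\subset \mathbb{C}^2$ for the lifted map $f_{c_0}$ (the skew-product when $\beta$ is integer, the solenoid map when $\beta$ is non-integer). The fibre $\mathcal{C}_z(\Lambda_{c_0})$ is to be interpreted as the space of admissible pre-histories of $z$ inside $\Lambda_{c_0}$; the continuity of $z\mapsto \mathcal{C}_z$ on evenly covered pieces of $\Lambda_{c_0}$ supplied by Theorem \ref{nossa} is what promotes the fibred set to an honest compact subset of $\mathbb{C}^2$, and the defining condition (ii) of hyperbolicity for $\Lambda_{c_0}$ (uniform contraction/expansion of branches) transfers to uniform hyperbolicity of $\widetilde{\Lambda}_{c_0}$ under $f_{c_0}$.

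Next, I would build the motion branch by branch. Locally near any $(z_0,\xi_0)\in\widetilde{\Lambda}_{c_0}$, the inverse branches of $f_c$ are given explicitly from the algebraic relation $(w-c)^q=z^p$, so they depend holomorphically on $c$ in some neighbourhood $U$ of $c_0$ and shrink distances by a factor bounded away from $1$, uniformly in $c\in U$ and uniformly on $\widetilde{\Lambda}_{c_0}$, provided $U$ is chosen small enough. Each point of $\widetilde{\Lambda}_{c_0}$ is coded by an infinite admissible sequence of such branches; composing the corresponding $c$-dependent inverse branches produces a nested sequence of discs in $\mathbb{C}^2$ whose diameters tend to $0$ geometrically, defining
\[
h(c,z_0,\xi_0)=\bigcap_{n\geq 0} g_{c}^{\sigma_0}\circ\cdots\circ g_c^{\sigma_{n-1}}(D_n),
\]
a holomorphic function of $c\in U$. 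Holomorphy in $c$ is preserved under uniform limits; the map $h_{c_0}$ is the identity by construction; and injectivity of $h_c$ for $c\in U$ follows from the standard Koebe/distortion argument since distinct codes correspond to eventually disjoint images of inverse branches. This gives a holomorphic motion of $\widetilde{\Lambda}_{c_0}$ in the sense of Ma\~n\'e--Sad--Sullivan, and pushing forward by the coordinate projection $\pi:\mathbb{C}^2\to\mathbb{C}$ produces the desired (branched) motion of $\Lambda_{c_0}$; invariance and hyperbolicity of $\Lambda_c=\pi h_c(\mathcal{B}(\Lambda_{c_0}))$ under $(w-c)^q=z^p$ follow from $f_c$-invariance and hyperbolicity of $h_c(\widetilde{\Lambda}_{c_0})$ together with the semiconjugacy $\pi f_c=\mathbf{f}_c\circ\pi$.

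The main obstacle I anticipate is not the contraction estimate itself but the uniformity: one must verify that a single neighbourhood $U$ of $c_0$ can be chosen so that (a) all inverse branches appearing in the codes of all points of $\widetilde{\Lambda}_{c_0}$ extend holomorphically to $U$ with images in a common compact piece where $f_c$ is hyperbolic; (b) the contraction rate is bounded away from $1$ uniformly in $c\in U$ and in the choice of branch; and (c) the Cantor fibres $\mathcal{C}_z$ vary continuously for $c=c_0$, hence (by the previous point) the whole bundle $\mathcal{B}(\Lambda_{c_0})$ can be tracked as a single object rather than fibre by fibre. This amounts to a careful quantitative version of the hyperbolicity axioms in \S\ref{sect25} and is where the bulk of the technical work lies; once this uniformity is in place, the rest of the argument is the standard coded-orbit construction of a holomorphic motion for a hyperbolic invariant set in $\mathbb{C}^2$.
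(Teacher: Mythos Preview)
Your approach is essentially the paper's: lift to the bundle in $\mathbb{C}^2$, use that the inverse branches of $\mathbf{f}_c$ contract uniformly and depend holomorphically on $c$ in a fixed neighbourhood $U$, and build $h_c$ point by point from the coded orbit; the paper phrases this as an explicit shadowing lemma (for each $\mathbf{f}_{c_0}$-orbit $\{z_j\}$ there is a unique $\mathbf{f}_c$-orbit $\{w_j\}$ with $d_\mu(w_j,z_j)<\ep$, obtained as a normal-family limit of the pulled-back sequence $g_{nj}$), which is exactly your nested-intersection construction in different clothing, and injectivity comes from uniqueness of the shadow rather than Koebe distortion.

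One small correction: in the expanding case the fibre $\mathcal{C}_z^{+}(\Lambda_{c_0})$ encodes \emph{forward} $\mathbf{f}_{c_0}$-orbits of $z$ in $\Lambda_{c_0}$ (see \eqref{series}--\eqref{north}), not pre-histories; the contracting inverse branches are then applied backward along this forward code to produce the shadow, so your intuition about which direction is contracting is right but the coding direction is reversed from what you wrote.
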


\subsection{Julia sets in $\mathbb{C}^2$}Theorem \ref{B} has many consequences. For example, it is possible to show that there is a holomorphic map $f_c: U \to  \mathbb{C}^2$ defined on a neighbourhood of $\mathcal{B}(\Lambda_{c_0})$  such that  the Julia set  $J(f_c)$ of $f_c$ (Lemma \ref{semiconjugacy}) is invariant under $f_c.$ It can be shown that $\pi:\mathbb{C}^{2} \to \mathbb{C}$ establishes a semi-conjugacy from $f_c$ to $\mathbf{f}_c$ and $\pi(J(f_c)) = J(\mathbf{f}_c).$
When the Julia set is hyperbolic, Theorem  \ref{B} especializes to \begin{thm} \label{Cc}If the Julia set $J(\mathbf{f}_c)$  is hyperbolic  then   $J(f_{c_0})$ is contained in  $\mathcal{B}(\Lambda),$ \begin{equation}\pi J(f_c)=J_c(\mathbf{f}_c),\end{equation} and  the conjugacies  $h_c: J(f_{c_0}) \to J(f_c),  \ c\in U,$  define a holomorphic motion. \end{thm}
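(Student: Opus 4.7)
My plan is to obtain Theorem \ref{Cc} as a direct specialisation of Theorem \ref{B} to the hyperbolic set $\Lambda_{c_{0}} = J(\mathbf{f}_{c_{0}})$, and then transfer the conclusion from the planar Cantor bundle to the lifted Julia set $J(f_{c_{0}}) \subset \mathbb{C}^{2}$.

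First, since $J(\mathbf{f}_{c_{0}})$ is hyperbolic by hypothesis, Theorem \ref{giocco} is directly applicable and produces a neighbourhood $U$ of $c_{0}$ together with a holomorphic motion $h: U \times \mathcal{B}(J(\mathbf{f}_{c_{0}})) \to \mathbb{C}^{2}$ for which each $\Lambda_{c} := \pi h_{c}(\mathcal{B}(J(\mathbf{f}_{c_{0}})))$ is a hyperbolic set of $(w-c)^{q} = z^{p}$. Shrinking $U$ if necessary, persistence of hyperbolicity and continuation of repelling cycles force $\Lambda_{c} = J(\mathbf{f}_{c})$. Next I would verify the inclusion $J(f_{c_{0}}) \subset \mathcal{B}(J(\mathbf{f}_{c_{0}}))$: Lemma \ref{semiconjugacy} gives $\pi(J(f_{c_{0}})) = J(\mathbf{f}_{c_{0}})$, so that for each $(z,w) \in J(f_{c_{0}})$ the base point $z$ lies in $J(\mathbf{f}_{c_{0}})$, and the characterisation of the Cantor fibre $\mathcal{C}_{z}$ provided by Theorem \ref{nossa} -- as the collection of vertical coordinates generated by compatible backward orbits of $z$ under the $\mathbb{C}^{2}$-dynamics -- then forces $w$ to lie in $\mathcal{C}_{z}(J(\mathbf{f}_{c_{0}}))$.

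With these two ingredients in place, the restriction of $h$ to $U \times J(f_{c_{0}})$ is automatically a holomorphic motion into $\mathbb{C}^{2}$. From the construction of $h$ in the proof of Theorem \ref{B}, each $h_{c}$ intertwines the dynamics of $f_{c_{0}}$ and $f_{c}$ on the moved part of the bundle, so $h_{c}(J(f_{c_{0}}))$ is compact, $f_{c}$-invariant, hyperbolic, and contains the analytic continuations of all repelling cycles of $f_{c_{0}}$. Since $J(f_{c})$ equals the closure of its repelling periodic orbits, this identifies $h_{c}(J(f_{c_{0}})) = J(f_{c})$. Projecting and invoking the semi-conjugacy yields $\pi J(f_{c}) = J(\mathbf{f}_{c})$, and the holomorphic motion obtained is precisely the family of conjugacies $h_{c}: J(f_{c_{0}}) \to J(f_{c})$.

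The main obstacle will be the equality $h_{c}(J(f_{c_{0}})) = J(f_{c})$, as opposed to a mere inclusion. The $\subset$ direction is comparatively soft: it follows from the hyperbolicity and invariance transported by $h$, together with continuation of repelling periodic points. The $\supset$ direction requires ruling out the possibility that $f_{c}$ develops new repelling cycles outside the continuation of those of $f_{c_{0}}$. This is the usual content of structural stability of hyperbolic systems, but in the $\mathbb{C}^{2}$ setting it has to be extracted from the fact that $h$ itself furnishes a topological conjugacy on the whole invariant set, not only on the periodic part -- a feature that depends delicately on the construction of $\mathcal{B}$ and $h$ carried out in the proof of Theorem \ref{B}.
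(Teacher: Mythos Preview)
Your overall plan is the paper's plan: specialise Theorem \ref{giocco} to $\Lambda_{c_0}=J_{c_0}$ and read off the conclusions. Two points where you diverge from the paper are worth noting.

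First, the inclusion $J(f_{c_0})\subset\mathcal{B}(\Lambda_{c_0})$ is already the content of Lemma \ref{semiconjugacy}$(c)$; no appeal to Theorem \ref{nossa} is needed, and your description of the fibre $\mathcal{C}_z$ via ``backward orbits'' is off for an expanding $\Lambda$ (the fibres encode \emph{forward} orbits in that case).

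Second, and more substantively, the obstacle you flag --- showing $h_c(J(f_{c_0}))\supset J(f_c)$ --- is resolved in the paper not by an abstract structural-stability argument but by invoking parts $(e)$ and $(f)$ of Theorem \ref{giocco}. Part $(f)$ furnishes, for each $c\in U$, an \emph{inverse} holomorphic motion $g:U\times\mathcal{B}(\Lambda_c)\to\mathbb{C}^2$ based at $c$ with $h_c\circ g_{c_0}=\mathrm{id}$; since $g_{c_0}$ is also a conjugacy, it sends periodic points of $f_c$ to periodic points of $f_{c_0}$, which immediately gives the missing inclusion. Part $(e)$ handles the matching of cycles on the planar side. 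Your sketch gestures at this content but does not locate it in the tools already available; once you use $(f)$ explicitly, the ``delicate'' dependence you worry about disappears.

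One further caveat: the paper's detailed version (Theorem \ref{fds}) only yields $\pi J(f_c)=J_c\cap\Omega$ for a fixed neighbourhood $\Omega$ of $J_{c_0}$, not the global equality $\pi J(f_c)=J_c$. Your claim that ``persistence of hyperbolicity and continuation of repelling cycles force $\Lambda_c=J(\mathbf{f}_c)$'' is therefore too strong without the additional upper-semicontinuity hypothesis $(\star)$ of Remark \ref{dfg}.
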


Theorem \ref{B} has an interesting meaning for parameters close to zero. It is possible to prove that $J_0 = \mathbb{S}^1$ is hyperbolic and $J(f_{0})$ is a solenoid.   Since $J(f_c)$ is the image of a holomorphic motion of $J(f_0),$ each $J(f_c)$ may be considered  a \emph{quasi-solenoid,} so that 
\begin{thm}[\bf Solenoids] \label{Dd}If $\beta >1$ is in $\mathbb{Q}-\mathbb{Z},$  then  for every $c$ in a neighbourhood of zero, the Julia set of $z^{\beta} +c$ is the projection of a quasi-solenoid in $\mathbb{C}^2.$ 

\end{thm}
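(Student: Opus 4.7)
The plan is to deduce Theorem \ref{Dd} from Theorem \ref{Cc} applied at the base point $c_{0}=0$, so the work reduces to verifying the two hypotheses in the $c=0$ case: that $J(\mathbf{f}_{0})$ is a hyperbolic Julia set in the plane, and that $J(f_{0})\subset\mathbb{C}^{2}$ is a genuine (Smale-type) solenoid. Once both are in place, Theorem \ref{Cc} automatically supplies the holomorphic motion $h_{c}\colon J(f_{0})\to J(f_{c})$ and the semi-conjugacy $\pi J(f_{c})=J(\mathbf{f}_{c})$, and the conclusion follows from the definition of quasi-solenoid as the image of a solenoid under a holomorphic motion.

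First I would handle the plane dynamics for $c=0$. The correspondence $\mathbf{f}_{0}$ is $w^{q}=z^{p}$, so $|z|=1$ forces $|w|=1$ and the unit circle is fully invariant; writing $z=e^{i\theta}$, the branches act by $\theta\mapsto (p\theta+2\pi k)/q$ mod $2\pi$, which is a multivalued expanding map of $\mathbb{S}^{1}$ with uniform expansion factor $\beta=p/q>1$. Standard arguments for expanding maps on the circle show that repelling periodic orbits are dense in $\mathbb{S}^{1}$, so $J_{0}=\mathbb{S}^{1}$ in the sense of \S\ref{obsec} and it is a hyperbolic Julia set in the sense of \S\ref{sect25}. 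Since hyperbolicity of Julia sets is open in the parameter $c$ (by Theorem \ref{B}, or directly by persistence of hyperbolic sets), this holds throughout some neighbourhood $U$ of $0$, which is the neighbourhood claimed in the statement.

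Next I would identify $J(f_{0})$ in $\mathbb{C}^{2}$ as a solenoid. The lift $f_{0}\colon U\to\mathbb{C}^{2}$, constructed as outlined in the discussion preceding Theorem \ref{B}, embeds the inverse limit of forward orbits of $\mathbf{f}_{0}\colon\mathbb{S}^{1}\to\mathbb{S}^{1}$ into $\mathbb{C}^{2}$; restricted to a suitable solid torus neighbourhood of this image, $f_{0}$ is a skew-product that expands by the factor $\beta$ along the circle direction and uniformly contracts in the transverse disk direction. This puts $f_{0}$ in Smale's framework for hyperbolic attractors, so the maximal invariant set in that tubular neighbourhood is homeomorphic to the classical $(p,q)$-solenoid and coincides with $J(f_{0})$.

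With both hypotheses verified, Theorem \ref{Cc} supplies, after possibly shrinking $U$, a holomorphic motion $h\colon U\times J(f_{0})\to\mathbb{C}^{2}$ with $h_{c}(J(f_{0}))=J(f_{c})$ and $\pi J(f_{c})=J(\mathbf{f}_{c})$. Each $J(f_{c})$ is therefore, by definition, a quasi-solenoid, and projecting by $\pi$ realises $J(\mathbf{f}_{c})$ as the image under $\pi$ of a quasi-solenoid in $\mathbb{C}^{2}$, which is exactly the statement of Theorem \ref{Dd}. The main obstacle in carrying this out rigorously is the solenoid identification in the second paragraph: one must set up the lift $f_{0}$ with enough care that the expansion/contraction splitting is explicit, and then match the resulting attractor with the inverse limit description in order to call it a solenoid rather than just some abstract hyperbolic Cantor bundle over $\mathbb{S}^{1}$.
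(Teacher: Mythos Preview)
Your overall strategy matches the paper's: verify that $J_0=\mathbb{S}^1$ is a hyperbolic Julia set (the paper does this in Theorem~\ref{hooi}), identify $J(f_0)=\mathcal{B}(\mathbb{S}^1)$ as a solenoid (the paper does this in \S\ref{ksc} and Theorem~\ref{int}), and then invoke Theorem~\ref{Cc}/Theorem~\ref{fds} to transport this structure by the holomorphic motion $h_c$.

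One point in your second paragraph needs correction. You describe $f_0$ as ``expanding by the factor $\beta$ along the circle direction and uniformly contracting in the transverse disk direction,'' placing $J(f_0)$ in Smale's framework as a hyperbolic \emph{attractor} of $f_0$. That is not what happens: from the explicit formula~\eqref{fgc}, the bundle map $f_0$ expands by $\delta^{-1}>1$ in the disk coordinate as well, so $J(f_0)$ is a repeller for $f_0$, not an attractor (cf.\ Theorem~\ref{jsd}, where $f_c$ is shown to be expanding on $\mathcal{B}(\Lambda)$). The solenoidal structure is instead realised by the \emph{inverse} branches: the paper writes down explicit maps $u_k:\mathbb{T}\to\mathbb{T}$ in~\eqref{hey} which contract the solid torus, and obtains $\mathcal{B}(\mathbb{S}^1)$ as the nested intersection $\bigcap_n \omega^n(\mathbb{T})$ with $\omega(A)=\bigcup_k u_k(A)$. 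Equivalently, one uses the symbolic description of Theorem~\ref{int} to identify $\mathcal{B}(\mathbb{S}^1)$ with a quotient of $\mathbb{R}\times\Sigma_q$. Either route gives the solenoid, but not via a contracting disk direction for $f_0$ itself. Once you adjust this, your argument goes through and coincides with the paper's.
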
 

  The phenomenon described in Theorems \ref{Dd} and \ref{Cc} for the bundle maps $f_c$ bears some similarity with the result established by Hubbard and Oberste-Vorth  for H\'enon maps $g: \mathbb{C}^2 \to \mathbb{C}^2 $ given by \begin{equation}g_{c,b}(z,w)= (z^2 +c - bw, z). \end{equation}
      Indeed, if $q=1$ and $p=2,$ in which case $\mathbf{f}_c(z)=z^2+c,$ then the bundle map $f_c$ admits an extension $f_c: \mathbb{C}^2 \to \mathbb{C}^2$ given by the skew product \begin{equation}f_c(z,w) = (z^2+c,\,  \delta^{-1}w - rz^2 +rc ). \end{equation} It was proved by \cite{Hubbard95} that if  $p_c(z)=z^2+c$ is expanding on its Julia set and $|b|$ is sufficiently small,  then $g=g_{c,b}$ is hyperbolic on its Julia set $J(g)$ and $g:J(g) \to J(g)$ is topologically conjugate to the inverse limit of $p_c$ on its Julia set.  Recently  \cite{Smillie2010} proved that if $c=0$ and $|b| < (\sqrt{2} -1)/2,$ then $g:J(g) \to J(g)$ is topologically conjugate to a solenoid.

\subsection{Branched holomorphic motions} The projection of holomorphic motions given by Theorem \ref{B} determines branched holomorphic motions on the plane (\S \ref{pld}). This concept was  introduced by  \cite{Lyubich2015} for automorphisms of $\mathbb{C}^2.$  The quasiconformal properties of branched holomorphic motions obtained in this way are described in Theorem \ref{ofd}.

 The ultimate  consequence  of Theorem \ref{B}  should be the   application  of the ideas of   \cite{Ruelle} to prove the real analyticity of  the Hausdorff dimension of $J(f_c)\subset \mathbb{C}^2$ as a function of $c,$ which we believe is true, at least for $c$ close to zero.  
 
 {\it The main ideas of this paper may be easily generalized to other types of correspondences. }

   \subsection{Notation and terminology} \label{ghv} 
  Any connected open set is a \emph{region}.
 $$\mathbb{C}^* =\mathbb{C}-\{0\}, \  \hat{\mathbb{C}} =\mathbb{C}\cup \{\infty\}, \  \mathbb{N}_0=\{0,1, \cdots\}. $$ 
   $$ \mathbb{S}^1=\{z\in \mathbb{C}: |z|=1\},  \ \mathbb{T}=\mathbb{S}^{1}\times \overline{\mathbb{D}}.$$
  $\operatorname{card} (A)$ indicates the cardinality of a set $A.$ 
Every holomorphic correspondence is a multifunction. By a \emph{multifunction}  $\mathbf{f}: X\to Y$ we mean any relation $\mathbf{f} \subset X\times Y$ for which $$\mathbf{f}(x)=\left \{y\in Y: (x,y)\in \mathbf{f} \right\} $$ is nonempty, for every $x\in X.$  We often write $x\xrightarrow{\mathbf{f}} y$ whenever $(x,y)\in \mathbf{f},$ in which case $y$ is called an \emph{image} of $x,$ and $x$ a \emph{pre-image} of $y$ under $\mathbf{f}.$ So $\mathbf{f}(x)$ is the set of images of $x$ and  $\mathbf{f}^{-1}(y)$ is the set of pre-images of $y.$ More generally, \begin{equation}\mathbf{f}(A) =\bigcup_{x\in A} \mathbf{f}(x), \ \ \mathbf{f}^{-1}(B)=\bigcup_{y\in B} \mathbf{f}^{-1}(y),\end{equation} for every $A\subset X$ and $B\subset Y.$ 
 
\nid The composition $\mathbf{f}\circ\mathbf{g}:X\to Y$ of multifunctions $\mathbf{g}:X \to Z$ and $\mathbf{f}: Z \to Y$ is defined by the relation: $x\mapsto y$ iff there is $z\in Z$ such that  $x\xrightarrow{\mathbf{g}} z \xrightarrow{\mathbf{f}} y. $ Notice that $\textbf{log}(z)$ (the inverse of $\mathop{e}^{z}$) and $\textbf{arg}(z)$ are examples of multifunctions.

The restriction of a multifunction $\mathbf{f}:X \to Y$ to a subset $A\subset X$ is  $\mathbf{f}|_{A}=\mathbf{f}\cap A\times Y. $

 \nid \emph{We shall use boldface letters to denote multifunctions which are not single valued.}  The terms map, function, transformation, etc, are always meant to be single-valued.  
 
 If $f: U_0\to V_0$ and $g: U_1 \to V_1$ are functions,  then  by convention the domain of $g\circ f$ is the union of all sets $A\subset U_0$ such that $f(A) \subset U_1.$

\section{Holomorphic motions of hyperbolic sets}\label{first}

This paper is concerned with the one parameter family of holomorphic correspondences \begin{equation}\mathbf{f}_c:\hat{\mathbb{C}} \to \hat{\mathbb{C}}\end{equation}determined by integers $p>q\geq 2:$ \begin{equation}\label{HC}\mathbf{f}_c(z)=\left\{ w\in \mathbb{C}: (w-c)^q=z^{p}\right\}.\end{equation}

\subsection{Julia sets}\label{obsec}  Let $\mathbf{f}_c$ be the correspondence defined in \eqref{HC}. An  \emph{orbit} of $\mathbf{f}_c$ is any sequence $\{z_k\}$  of iterates \begin{equation} z_0 \xrightarrow{\mathbf{f}_c} z_1 \xrightarrow{\mathbf{f}_c}  z_2 \xrightarrow{\mathbf{f}_c} \cdots\end{equation}   Finite orbits $\{z_i\}_{0}^{n}$ for which $z_n=z_0$ are called \emph{cycles}. Every element of the cycle is a \emph{periodic point} of $\mathbf{f}_c.$

\nid A \emph{branch} of $\mathbf{f}_c$ is a holomorphic function $\varphi:U\to \mathbb{C}$ such that $ z\xrightarrow{\mathbf{f}_c} \varphi(z)$ on $U.$  Provided no point of the cycle $\{z_k\}$ belongs to $\{0,\infty\},$ we may apply the implicit function theorem to determine branches $\varphi_k$ of  $\mathbf{f}_c$ taking $z_{k-1}$ to $z_k.$ Each branch $\varphi_k$ is univalent (i.e., injective, which implies that $\varphi_k$ defines  a bi-holomorphic map onto its image).  Let $\Phi=\varphi_n\circ \cdots \circ \varphi_1.$ \emph{Repelling cycles} are those for which \begin{equation}|\Phi'(z_0)|>1.\end{equation}   If some point of the cycle belongs to $\{0,\infty\},$ or $|\Phi'(z_0)| <1,$ then  $\{z_k\}$ is an \emph{attracting cycle} of $\mathbf{f}_c.$
\emph{The Julia set} $J(\mathbf{f}_c)$ is the closure of the union of all repelling cycles of $\mathbf{f}_c;$ the \emph{dual Julia set} $J^*(\mathbf{f}_c)$ is the closure of the union of all attracting cycles of $\mathbf{f}_c.$ We often write $J_c=J(\mathbf{f}_c)$ and  $J_c^*=J^*(\mathbf{f}_c).$ It can be easily checked that for every correspondence $(w-c)^q=z^{p}$ there is a  positive radius $R=R(c,p,q)$ such that $D_R=\{|z| > R\}$ is fully invariant under $\mathbf{f}_c,$ i.e., if $z\xrightarrow{\mathbf{f}_c} w$ and $|z|> R,$ then $|w| > R.$ In this case, every orbit in $D_R$ converges exponentially fast to $\infty.$ This implies $J_c\subset D_R$ and $J_c^* \subset D_R.$ 

\subsection{Hyperbolic sets}\label{sect25}   Suppose $\Lambda\subset \mathbb{C}^*$ is a compact set which is {\it semi-invariant} in the following sense: every point $z\in \Lambda$ has at least one image $w\in \Lambda$ and at least one pre-image $\zeta\in \Lambda$ under $\mathbf{f}_c.$  This property implies $c\not \in \Lambda,$ and so $\Lambda$ is contained in some $A_r \cap (c+ A_r),$ where \begin{equation}A_r=\left\{z\in \mathbb{C}: r< |z| < \frac{1}{2r}\right\}.\end{equation}  The class of  semi-invariant compact sets in $A_r\cap (c+ A_r)$  is denoted by $\inv(A_r,c).$  We say that $\Lambda\in \inv(A_r,c)$ is a \emph{hyperbolic set} of $\mathbf{f}_c$ if there is a conformal metric $\mu(z)|dz|$ defined on a region $V$ containing  $\Lambda$ such that  either 
\begin{equation}\label{nice}
(a) \   \ \inf_{z, \varphi} \|\varphi'(z)\|_{\mu} > 1  \ \ \ \ \ \ \textrm{or}  \ \ \ \ \ \ \ \   (b) \ \ \sup_{z,\varphi} \| \varphi'(z)\|_{\mu}<1,
\end{equation}
where $\sup$ and $\inf$ are taken over all $z\in V$ and all possible branches $\varphi$ of $\mathbf{f}_c$ defined at $z$ such that $\varphi(z)\in V.$

 The class of all hyperbolic sets $\Lambda$ in  $ \inv(A_r,c)$ is denoted by $\hyp,$ which can be partitioned  into two disjoint {\it hyperbolic classes}: $\Lambda$ is \emph{expanding}  if it satisfies $(a)$ of  \eqref{nice};  it is attracting if  $\Lambda$ satisfies  condition $(b).$
   
An expanding hyperbolic set is a \emph{hyperbolic repeller} if  $\mathbf{f}_{c}^{-1}(\Lambda)=\Lambda.$  If $\Lambda$ is an attracting hyperbolic set such that $\mathbf{f}_c(\Lambda)=\Lambda,$  then $\Lambda$ is a  \emph{hyperbolic attractor} of $\mathbf{f}_c.$  
 We say that $\Lambda=J(\mathbf{f}_{c})$ is  a \emph{hyperbolic Julia set} if $\Lambda$ is hyperbolic and expanding.  The  dual Julia set $J_c^*$ of $\mathbf{f}_{c}$ is \emph{hyperbolic} if $J_c^*$ is hyperbolic and attracting.

\begin{lem}\label{naive} Let $c\in \mathbb{C}$ and suppose $\displaystyle0< r < \min\left \{{1/\sqrt{2}}, \  (4|c|)^{-1}  \right \}.$ There is $\delta>0$ such that for every $\Lambda \in \inv(A_r,c)$ \emph{and} $z\in \Lambda,$  each of the sets  \begin{equation}\mathcal{D}_1^{+}(z)=r\mathbf{f}_c(z) \cap \Lambda + \delta \overline{\mathbb{D}},\end{equation} \begin{equation}\mathcal{D}_1^{-}(z)=r\mathbf{f}^{-1}_c(z) \cap \Lambda + \delta \overline{\mathbb{D}},\end{equation}  consists of finitely many disjoint closed disks of radius $\delta$ contained in $\cdisk.$ 
\end{lem}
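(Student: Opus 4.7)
The lemma is a uniform separation-and-scale estimate that follows from the explicit algebraic form of $\mathbf{f}_c$. The plan is to write down the $q$ images and $p$ preimages of $z$ as evenly-spaced points on circles, extract the minimum pairwise separations in terms of $r,p,q$, and then choose $\delta$ small enough that neither the disjointness nor the containment in $\overline{\mathbb{D}}$ can fail.

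First I would parameterize. Since $z\in\Lambda\subset A_r$ forces $z\neq 0$, the set $\mathbf{f}_c(z)$ consists of the $q$ vertices of a regular $q$-gon of circumradius $|z|^{p/q}$ centered at $c$, and any two distinct images $w_j,w_k$ satisfy
\[
|w_j-w_k| \;\geq\; 2|z|^{p/q}\sin(\pi/q) \;>\; 2 r^{p/q}\sin(\pi/q),
\]
using $|z|>r$. Similarly, $z\in\Lambda\subset c+A_r$ forces $z\neq c$, so $\mathbf{f}_c^{-1}(z)$ consists of $p$ equispaced points on the circle of radius $|z-c|^{q/p}$ around $0$, and any two distinct preimages $\zeta_j,\zeta_k$ satisfy
\[
|\zeta_j-\zeta_k| \;\geq\; 2|z-c|^{q/p}\sin(\pi/p) \;>\; 2 r^{q/p}\sin(\pi/p),
\]
now using $|z-c|>r$. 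Multiplying by $r$ gives lower bounds $s^+=2r^{1+p/q}\sin(\pi/q)$ and $s^-=2r^{1+q/p}\sin(\pi/p)$ for the mutual distances of the (at most $q$ and $p$, respectively) points in $r\mathbf{f}_c(z)\cap\Lambda$ and $r\mathbf{f}_c^{-1}(z)\cap\Lambda$.

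Next I would handle the containment. Every point in $r\mathbf{f}_c(z)\cap\Lambda$ or $r\mathbf{f}_c^{-1}(z)\cap\Lambda$ is obtained by multiplying a point of $\Lambda\subset A_r$ by $r$, so its modulus is strictly less than $r\cdot(1/2r)=1/2$. The two hypotheses on $r$ enter precisely here: $r<1/\sqrt{2}$ makes $A_r$ a non-degenerate annulus containing $\mathbb{S}^1$, while $r<(4|c|)^{-1}$ keeps $|c|$ small enough that $A_r\cap(c+A_r)$ is substantial and $\inv(A_r,c)$ is nontrivial. Setting
\[
\delta \;=\; \tfrac{1}{3}\min\bigl(s^+,\,s^-,\,1\bigr),
\]
the closed $\delta$-disks around the centers in $r\mathbf{f}_c(z)\cap\Lambda$ are pairwise disjoint (their separation exceeds $2\delta$) and each sits inside $\overline{\mathbb{D}}$ (the center has modulus less than $1/2$ and $\delta<1/2$); the same holds on the preimage side, and $\delta$ depends only on $r,p,q$.

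The hard part, such as it is, is keeping straight which set-theoretic inclusion yields which estimate: the image separation is controlled by the inner radius of $A_r$ through $|z|>r$, and the preimage separation by the inner radius of $c+A_r$ through $|z-c|>r$. Once this bookkeeping is right, the lemma reduces to an elementary calculation in which the scaling by $r$ does the work of renormalizing everything into $\overline{\mathbb{D}}$ uniformly over $\Lambda\in\inv(A_r,c)$ and $z\in\Lambda$.
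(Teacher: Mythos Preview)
Your argument is correct and follows the same basic strategy as the paper: identify the images/preimages as vertices of a regular polygon on a circle whose radius you bound below, extract a uniform separation depending only on $r,p,q$, and then pick $\delta$ small. You are in fact more explicit than the paper, which simply asserts a constant $\rho_r$ without writing down the $\sin(\pi/p)$, $\sin(\pi/q)$ factors.

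One small point of comparison is worth recording. For the containment step you use $\Lambda\subset A_r$ to get $|r\zeta|<1/2$ for $\zeta\in\Lambda$, which is perfectly valid. The paper instead uses $\Lambda\subset c+A_r$ together with the hypothesis $r<(4|c|)^{-1}$ (equivalently $r|c|<1/4$) to obtain $r(\mathbf f_c^{-1}(z)\cap\Lambda)\subset r(A_r+c)\subset D(3/4)$. So in the paper's proof that hypothesis is genuinely invoked, whereas in your route it is not; your side remark that ``the two hypotheses on $r$ enter precisely here'' is therefore not quite right for your own argument---in your version the bound $r<(4|c|)^{-1}$ plays no role in the estimates and is only relevant to the ambient setup of $\inv(A_r,c)$. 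This does not affect correctness; if anything, your containment argument is the cleaner of the two.
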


\begin{proof} For $\mathcal{D}_1^{-}(z)$ the proof proceeds as follows. Let $z\in \Lambda.$ The points of $\mathbf{f}_c^{-1}(z)$ are the solutions of $\zeta^{p}=(z-c)^{q},$ and are symmetrically disposed on a circle of radius greater than $r^{q/p}$ centred at $0.$ Since \begin{equation}r(\mathbf{f}_c^{-1}(z) \cap \Lambda) \subset r(A_r +c) \subset D(3/4):=\left\{z\in \mathbb{C}: |z| < 3/4 \right\},  \end{equation} it follows that $r(\mathbf{f}_c^{-1}(z) \cap \Lambda)$ is a finite set of points $a_i$ in $D(3/4)$ with $|a_i - a_j|> \rho_r,$ where the constant $\rho_r$ depends only on $r.$ Keeping $r$ fixed, for $\delta< 1/4$ sufficiently small we conclude that $r(\mathbf{f}_c^{-1}(z) \cap \Lambda) + \delta \overline{\mathbb{D}}$  consists of finitely many disjoint closed disks contained in $\overline{\mathbb{D}}.$ A similar and slightly simpler proof can be given for  $\mathcal{D}_1^{+}(z).$\end{proof}

\subsection{Cantor bundles}\label{bundles} Let $\Lambda \in \inv(A_r,c).$ For a given $z\in \Lambda,$ let  $\mathcal{D}_{n}^{+}(z)$ be the union of all disks \begin{equation}\label{form}r(z_1+ \delta z_2 +\cdots + \delta^{n-1}z_n) + \delta^n\overline{\mathbb{D}}\end{equation} such that \begin{equation}\label{series}z\xrightarrow{\mathbf{f}_c} z_1 \xrightarrow{\mathbf{f}_c} z_2 \xrightarrow{\mathbf{f}_c} \cdots, \ \  z_i\in \Lambda. \end{equation}
Using Lemma \ref{naive},  we see that $\mathcal{D}_{n+1}^{+}(z)$ can be obtained from $\mathcal{D}_{n}^{+}(z)$ after replacing $\overline{\mathbb{D}}$ in each disk \eqref{form} by the corresponding set of smaller disks $\mathcal{D}_{1}^{+}(z_n) \subset \mathbb{D}.$  As a result,  we have a nested sequence of compact sets $\mathcal{D}_{n+1}^{+}(z) \subset \mathcal{D}_{n}^{+}(z).$ Therefore \begin{equation}\mathcal{C}^{+}(z)=\bigcap_{n>0} \mathcal{D}_{n}^{+}(z) \end{equation} is nonempty and every point of $\mathcal{C}^{+}(z)$ is uniquely expressed as 
\begin{equation}\label{north} s=r\sum_{n=1}^{\infty} \delta^{n-1}z_n, \end{equation}
where $\{z_n\}$ is a sequence satisfying \eqref{series}. By taking backward iterates, we define $\mathcal{D}_{n}^{-}(z)$ as the union of all disks in the form \eqref{form} such that\begin{equation}\label{seriez}\cdots\xrightarrow{\mathbf{f}_c} z_2 \xrightarrow{\mathbf{f}_c} z_1 \xrightarrow{\mathbf{f}_c} z, \ \  z_i\in \Lambda. \end{equation} The intersection of all $\mathcal{D}_n^{-}(z)$ gives a set $\mathcal{C}^-(z)$ whose elements are uniquely written as a series of the form \eqref{north}, where $\{z_n\}$ must satisfy \eqref{seriez}.
Notice that $\mathcal{C}^{+}(z)$  is Cantor set if every point of $\Lambda$ has at least two images in $\Lambda.$ The same holds for $\mathcal{C}^{-}(z)$ with respect to backward iterates. Let 
\begin{equation} \label{level}\mathcal{B}^{+}_{r,\delta}(\Lambda) = \bigcup_{z\in \Lambda} \{z\} \times \mathcal{C}^{+}(z), \ \  \mathcal{B}^{-}_{r,\delta}(\Lambda) = \bigcup_{z\in \Lambda} \{z\} \times \mathcal{C}^{-}(z).\end{equation}

For any subset $K$  of $\Lambda,$  $\mathcal{B}_{r,\delta}^{+}(K)$  can be defined as the right hand side of the first equation in \eqref{level}, with $K$ in  place of $\Lambda.$ 
 The  \emph{Cantor bundle}  $\mathcal{B}(\Lambda)$ of a hyperbolic set is either $\mathcal{B}_{r,\delta}^{+}(\Lambda)$ when $\Lambda$ is expanding or $\mathcal{B}_{r,\delta}^{-}(\Lambda)$ when $\Lambda$ is attracting.  The reason why we have omitted the index $(r, \delta)$ is explained by Lemma \ref{thv}.

\nid Hence every point  $(z, w)\in \mathcal{B}(\Lambda)$ is uniquely given by $w=s,$ where $s$ is series  like \eqref{north} whose first term is $rz$, and the points $z_n$ must satisfy either \eqref{series} or \eqref{seriez}. The surjective map $\pi:\mathcal{B}(\Lambda) \to \Lambda$ given by $\pi(z, s)= z$ is the \emph{bundle projection}. 

\begin{lem}\label{thv}{\it Let $\Lambda \in \inv(A_r,c)$ and $\ep \in \{1, -1\}.$  Any two Cantor bundles $\mathcal{B}^{\ep}_{r,\delta}(\Lambda)$ and $\mathcal{B}^{\ep}_{r',\delta'}(\Lambda)$ are related by a homeomorphism $ \Phi: \mathcal{B}^{\ep}_{r,\delta}(\Lambda) \to \mathcal{B}^{\ep}_{r',\delta'}(\Lambda)$ such that  \begin{equation}\label{notice} \pi\circ \Phi(z,w)=z, \ \ (z,w)\in \mathcal{B}_{r,\delta}(\Lambda), \end{equation}
where $\pi:\mathcal{B}^{\ep}_{r', \delta}(\Lambda)\to \Lambda$ is the bundle projection.}\end{lem}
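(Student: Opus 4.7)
I propose to factor the desired homeomorphism through a symbolic model of the bundle that does not depend on $(r,\delta)$. For $\ep=+1$, let
$$\Sigma^{+}(\Lambda)=\{(z,(z_n)_{n\geq 1})\in \Lambda\times \Lambda^{\mathbb{N}}: z\xrightarrow{\mathbf{f}_c}z_1\xrightarrow{\mathbf{f}_c}z_2\xrightarrow{\mathbf{f}_c}\cdots\}$$
with the product topology; $\Sigma^{+}(\Lambda)$ is compact and metrizable because $\Lambda$ is compact and the graph of $\mathbf{f}_c$ is closed. Define the decoding
$$\iota_{r,\delta}:\Sigma^{+}(\Lambda)\to \mathcal{B}^{+}_{r,\delta}(\Lambda),\qquad (z,(z_n))\longmapsto \left(z,\ r\sum_{n\geq 1}\delta^{n-1}z_n\right).$$
The series converges absolutely because $|z_n|<1/(2r)$, and by \eqref{north} the image lands in $\mathcal{B}^{+}_{r,\delta}(\Lambda)$; the uniqueness of the expansion stated right after \eqref{north} makes $\iota_{r,\delta}$ a bijection. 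The case $\ep=-1$ is handled identically with backward orbits. My candidate is then
$$\Phi:=\iota_{r',\delta'}\circ \iota_{r,\delta}^{-1}:\mathcal{B}^{+}_{r,\delta}(\Lambda)\longrightarrow \mathcal{B}^{+}_{r',\delta'}(\Lambda),$$
which preserves the first coordinate by construction, so \eqref{notice} holds automatically.

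It remains to verify that $\iota_{r,\delta}$ is a homeomorphism. Continuity follows from a uniform tail bound: for every integer $N\geq 0$,
$$\left|r\sum_{n>N}\delta^{n-1}z_n\right|\leq r\cdot\frac{1}{2r}\cdot\frac{\delta^N}{1-\delta}=\frac{\delta^{N}}{2(1-\delta)},$$
independently of the sequence $(z_n)\in \Sigma^{+}(\Lambda)$. Thus continuity of the full series reduces to continuity of a finite partial sum, which is trivial in the product topology. Since $\Sigma^{+}(\Lambda)$ is compact and $\mathcal{B}^{+}_{r,\delta}(\Lambda)$ is Hausdorff, the continuous bijection $\iota_{r,\delta}$ is automatically a homeomorphism, whence $\Phi$ is one as well.

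The main obstacle, and the only place where something specific to the correspondence is used, is the injectivity of $\iota_{r,\delta}$ --- the uniqueness of the coding. This is exactly the content of Lemma \ref{naive}: the disks $rz_1+\delta\overline{\mathbb{D}}$ are pairwise disjoint as $z_1$ ranges over $\mathbf{f}_c(z)\cap \Lambda$, so the first entry $z_1$ of the code is determined by the point $(z,s)\in \mathcal{B}^{+}_{r,\delta}(\Lambda)$; iterating this refinement inside each disk of $\mathcal{D}^{+}_n(z)$ determines all the $z_n$. Since the paper already incorporates this uniqueness into the definition of $\mathcal{C}^{+}(z)$ via \eqref{north}, the present lemma follows cleanly from the construction above, and the argument for $\ep=-1$ is literally the same reading $\mathbf{f}_c$ backwards.
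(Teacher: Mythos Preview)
Your proposal is correct and follows essentially the same idea as the paper: both define $\Phi$ by reading off the unique orbit $(z_n)$ encoded by $(z,s)\in\mathcal{B}^{\ep}_{r,\delta}(\Lambda)$ and re-encoding it with the parameters $(r',\delta')$. Your version is somewhat more explicit---you introduce the symbolic orbit space $\Sigma^{+}(\Lambda)$, verify the uniform tail bound for continuity, and invoke the compact-to-Hausdorff argument to conclude $\iota_{r,\delta}$ is a homeomorphism---whereas the paper simply writes down $\Phi(z,s)=(z,s')$ with $s'=r'(z_1+\delta'z_2+\cdots)$ and declares it a homeomorphism without further comment.
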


\begin{proof} Suppose $\ep=1.$ For every $(z,s) \in \mathcal{B}_{r,\delta}(\Lambda)$ there is a unique orbit  $z_i\in \Lambda_c$ of $\mathbf{f}_c$ starting at $z$ such that $s=r(z_1 + \delta z_2 + \cdots).$ The homeomorphism \begin{equation}\Phi: \mathcal{B}_{r,\delta}(\Lambda) \to \mathcal{B}_{r', \delta'}(\Lambda)\end{equation} is defined by $\Phi(z,s)=(z, s'),$ where $s'=r'(z_1 + \delta'z_2 + \cdots).$ The proof for $\ep=-1$ is similar.
\end{proof}

The function $f_c: U \to \mathbb{C}^2$ below is the bundle map associated to $\mathcal{B}(\Lambda).$ 
\begin{lem}[Bundle map] \label{semiconjugacy} {\it Suppose $\Lambda$ is a hyperbolic set of $\mathbf{f}_c.$ Then $\mathcal{B}(\Lambda)$ is compact and there is a  holomorphic  map $f_c: U \to \mathbb{C}^2,$ where $U$ is an open set containing $\mathcal{B}(\Lambda)$ and $f_c$ is given by \eqref{fgc} such that \begin{itemize} \item[$(a)$] $f_c: \mathcal{B}(\Lambda) \to \mathcal{B}(\Lambda)$ is a bijection; \item[$(b)$] $\pi: \mathcal{B}(\Lambda) \to \Lambda$ is a semi-conjugacy, in the sense that $\pi$ is surjective and every orbit  of $f_c$ in $\mathcal{B}(\Lambda)$  projects to  an orbit of $\mathbf{f}_c^{\ep}$ in $\Lambda,$ where $\ep=1$ if $\Lambda$ is expanding or $\ep=-1$ if $\Lambda$ is contracting;
\item[$(c)$] Let $J(f_c)$ be the closure of periodic points of $f_c: U \to \mathbb{C}^2.$ If  $\Lambda = J_c$ or $\Lambda=J_c^*$, then  $f_c$ restricts to a surjective map $f_c: J(f_c) \to J(f_c),$  and  \begin{equation}\pi J(f_c) = \Lambda;\end{equation} 
\item[$(d)$] for every $x\in \mathcal{B}_{r,\delta}(\Lambda),$ \begin{equation} x= \left (  \pi(x), \ r\sum_{n=1}^{\infty} \delta^{n-1} \pi f_c^n(x)\right).  \end{equation}
\end{itemize}
}
\end{lem}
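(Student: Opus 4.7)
The plan is to construct $f_c$ as the shift acting on the series expansions of bundle points, glued from local branches of $\mathbf{f}_c$ via the implicit function theorem, and then to read off (a)--(d) from the series structure. The expected formula is $f_c(z,w)=(\varphi(z),\,\delta^{-1}(w-r\varphi(z)))$, with $\varphi$ a local branch of $\mathbf{f}_c$ in the expanding case and of $\mathbf{f}_c^{-1}$ in the attracting case. To globalise it, I would use Lemma \ref{naive} to view each fibre $\mathcal{C}^{+}(z)$ as sitting inside the disjoint union of disks $rz_1+\delta\overline{\mathbb{D}}$, one per image $z_1\in\mathbf{f}_c(z)\cap\Lambda$. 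A sufficiently thin neighbourhood $U\subset\mathbb{C}^{2}$ of $\mathcal{B}(\Lambda)$ then decomposes fibrewise into components, each identified with a unique such image. The implicit function theorem, valid since $\Lambda\subset\mathbb{C}^{*}$ is bounded away from $c$, supplies a holomorphic branch $\varphi$ on the projection of each component, and the formula above defines $f_c:U\to\mathbb{C}^{2}$ holomorphically. Compactness of $\mathcal{B}(\Lambda)$ is a nested-intersection argument: $\mathcal{B}_{r,\delta}^{+}(\Lambda)=\bigcap_{n}\bigcup_{z\in\Lambda}\{z\}\times \mathcal{D}_{n}^{+}(z)$ lies in the compact set $\Lambda\times\overline{\mathbb{D}}$, and each level is closed by compactness of $\Lambda$ together with continuity of the disk construction on orbits.

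\textbf{Bijectivity, semi-conjugacy and (d).} Each point of $\mathcal{B}(\Lambda)$ has a unique presentation $(z,\,r\sum_{k\geq 1}\delta^{k-1}z_k)$ along an orbit $z\xrightarrow{\mathbf{f}_c} z_1\xrightarrow{\mathbf{f}_c}z_2\xrightarrow{\mathbf{f}_c}\cdots$ in $\Lambda$, and $f_c$ acts on it as the left shift $(z_1,\,r\sum_{k\geq 1}\delta^{k-1}z_{k+1})$. Injectivity is immediate from uniqueness of the presentation; surjectivity uses semi-invariance of $\Lambda$ to pick $z_{-1}\in\mathbf{f}_c^{-1}(z)\cap\Lambda$, so that $(z_{-1},\,rz+\delta w)\in\mathcal{B}(\Lambda)$ is the required preimage. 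Semi-conjugacy (b) is the identity $\pi\circ f_c=\varphi\circ\pi$ read off from the first coordinate, with the sign of $\epsilon$ dictated by whether $\varphi$ is a branch of $\mathbf{f}_c$ or of $\mathbf{f}_c^{-1}$. For (d), a straightforward induction gives $\pi f_c^{n}(x)=z_n$, and substituting back into the defining series rewrites the second coordinate of $x$ as $r\sum_{n\geq 1}\delta^{n-1}\pi f_c^{n}(x)$.

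\textbf{Julia set identification.} I expect (c) to be the main obstacle, but the series representation still does most of the work. The semi-conjugacy sends a period-$n$ point of $f_c$ to a periodic orbit of $\mathbf{f}_c$ obtained from a definite branch composition $\Phi=\varphi_n\circ\cdots\circ\varphi_1$ along the orbit in $\Lambda$, and the hyperbolicity dichotomy \eqref{nice} forces $|\Phi'(z_0)|>1$ (resp.\ $<1$), placing the projection in $J_c$ (resp.\ $J_c^{*}$). Conversely, given a cycle $\{z_0,\ldots,z_{n-1}\}\subset\Lambda$ of $\mathbf{f}_c$, the periodicity condition $f_c^{n}(z_0,w_0)=(z_0,w_0)$ reduces to
\begin{equation*}
(1-\delta^{n})\,w_0 \;=\; r\sum_{k=1}^{n}\delta^{k-1}z_k,
\end{equation*}
whose explicit solution $w_0=r\sum_{k\geq 1}\delta^{k-1}z_k$, obtained by periodic extension, visibly lies in $\mathcal{C}^{+}(z_0)$, producing the required lift inside $\mathcal{B}(\Lambda)$. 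Taking closures and using the hyperbolic identification of $J_c$ (resp.\ $J_c^{*}$) with the closure of all periodic cycles of $\mathbf{f}_c$ in $\Lambda$ yields $\pi J(f_c)=\Lambda$, and invariance of $J(f_c)$ under $f_c$ together with continuity supplies the surjectivity of $f_c:J(f_c)\to J(f_c)$.
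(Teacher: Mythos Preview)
Your proof is correct and takes essentially the same approach as the paper: the same formula $f_c(z,w)=(\varphi(z),\,\delta^{-1}w-r\delta^{-1}\varphi(z))$, the same shift action on the series representation to obtain (a), (b), (d), and the same cycle-lifting idea for (c), which you spell out more explicitly via $(1-\delta^{n})w_0=r\sum_{k=1}^{n}\delta^{k-1}z_k$ where the paper simply invokes $f_c(Q)=Q$ and continuity. The one minor divergence is the compactness step: you argue by nested intersection inside $\Lambda\times\overline{\mathbb{D}}$, whereas the paper instead embeds $\mathcal{B}(\Lambda)$ into the compact product $\Lambda^{\mathbb{N}_0}$ via $x\mapsto(\pi f_c^{n}(x))_{n\geq 0}$ and notes the image is closed.
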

\begin{proof}
For the construction of $f_c$ we may assume $\Lambda$ is expanding (the contracting case is similar and is obtained by taking branches of $\mathbf{f}_c^{-1}$ instead of $\mathbf{f}_c).$ There is an open set $O \supset \Lambda$ and $\ep>0$ such that (i) if $z\xrightarrow{\mathbf{f}_c} z_1,$ $ x \xrightarrow{\mathbf{f}_c} x_1$ and these points are in $O$ with $|z-x|< 2\ep$ and $|x_1-z_1| < 2\ep,$ then they are related by the same branch $\varphi$ of $\mathbf{f}_c,$ that is, $\varphi(z)=z_1$ and $\varphi(x)=x_1;$ and (ii) there is $\delta_{0} >0$ such that, for any $(z,w)$ and $(x,y)$ in $\mathcal{B}(\Lambda),$ 
\begin{equation}\label{plo} |(z,w) - (x,y)| < \delta_0 \implies  |x-z| < \ep, \  |x_1 - z_1| < \ep,\end{equation} where $w=r\sum_{1}^{\infty} \delta^{n-1} z_n$ and $y=r\sum_{1}^{\infty} \delta^{n-1}x_n.$  Let $U$ denote the $\delta_0$ neighbourhood of $\mathcal{B}(\Lambda)$ in $\mathbb{C}^2$ (the points within distance $\delta_0$ from $\mathcal{B}(\Lambda)). $ The \emph{bundle map} $f_{c}: U \to \mathbb{C}^2$ is defined by \begin{equation} \label{fgc}f_c(z,w) = (\varphi(z),  \ \delta^{-1} w  - r\delta^{-1} \varphi(z)),\end{equation} where $\varphi$ is a branch of $\mathbf{f}_c$ defined in the following way. Since   $(z,w)$ is $\delta_0$-close to a point $(x,y)\in \mathcal{B} (\Lambda),$ with $y=r\sum \delta^{n-1}x_n,$ we let  $\varphi$ be the branch of $\mathbf{f}_c$ that takes $x$ to $x_1.$  There is no loss of generality in assuming that  the domain of $\varphi$ includes the ball of radius $4\ep$ centred at $x.$ By \eqref{plo},  $z$ is in the domain of  $\varphi.$   The choice of $\varphi$ does not depend on $(x,y),$ for if $(\tilde{x}, \tilde{y})$ is another choice with $\tilde{y}= r\sum \delta^{n-1} \tilde{x}_n,$  then by \eqref{plo} the  finite orbits $x \xrightarrow{\mathbf{f}_c} x_1$ and $\tilde{x} \xrightarrow{\mathbf{f}_c} \tilde{x}_1$ are within distance $2\ep$ from each other, and by (i) they are  related by the same branch $\varphi.$ 
The  compactness of $\mathcal{B}(\Lambda)$ can be proved using the topological embedding $\phi: \mathcal{B}(\Lambda) \to \Lambda^{\mathbb{N}_0}$ given by \begin{equation}\phi(x)=\left ( \pi f_c^{n}(x)\right)_{n=0}^{\infty}.\end{equation} Since the image of $\phi$ is a closed subset of the product space (which is compact), we conclude that $\mathcal{B}(\Lambda)$ is compact. 
Every point $x\in \mathcal{B}(\Lambda)$ is presented  in the form $x=(z,w)$ where $w=r\sum_{1}^{\infty} \delta^{n-1}z_n,$ for some orbit $\{z_n\}$  of $\mathbf{f}_c$ starting at $z_0=z.$ An easy computation yields  
\begin{equation}\label{fde} f_c(z,w)= (z_1, r(z_2 + \delta z_3 + \delta^2 z_4 + \cdots)) \in \mathcal{B}(\Lambda).  \end{equation} Using this expression and the semi-invariance of $\Lambda,$  we prove that $f_c:\mathcal{B}(\Lambda) \to \mathcal{B}(\Lambda)$ is surjective and $\pi$ maps orbits to orbits (a semi-conjugacy).    If $Q$ is the set of periodic orbits of $f_c,$ then $\overline{Q}= J(f_c)$ and $f_c(Q)=Q.$ By continuity, we conclude $(c).$  Using \eqref{fde} we obtain $z_n=\pi f_c^n(x)$  and  therefore $(d).$     
\end{proof}

\subsection{Remark} Since $f_c:U \to \mathbb{C}^2$ does not map $U$ into itself, a periodic point of $f_c$ is obtained from a cycle of $f_c$ whose points, by definition, are contained in $U.$  In this paper $J(f_c)$ is the \emph{Julia set of $f_c,$} for the following reason. By hyperbolicity, every cycle of $\mathbf{f}_c^{\ep}$ in a small neighbourhood of $\Lambda$   is  repelling. Thus, for every periodic point $x= (z,w) \in U$ of period $n,$  
\begin{equation}\label{kcm}\det \operatorname{Jac}(f_c^n)(x)= \delta^{-n} |\varphi'(z)| > 1,\end{equation} where $\varphi$ is a branch of $(\mathbf{f}_c^{\ep})^n$ such that $\varphi(z)=z.$  Notice that $\varphi'(z)$ is the multiplier of the projected cycle. $\operatorname{Jac}$ stands for (complex) Jacobian matrix. We might have used \eqref{kcm} to define the  multiplier of any cycle of $f_c.$ In that sense, $J(f_c)$ would be simply the closure of repelling periodic orbits, even when $\Lambda=J_c^*$ is contracting. (Since $J_c^*$ is the projection of $J(f_c),$ this justifies the name of $J_c^{*}$  as the dual Julia set, but the standard definition(s) of Julia set for general maps of $\mathbb{C}^2$ is slightly different.) 

\subsection{Evenly covered sets}  
Suppose $\Lambda$ is a hyperbolic set of $\mathbf{f}_c$ and $L\subset \Lambda.$  We denote by $\embd(L)$ the space of all continuous maps  $\varphi:L \to \mathcal{B}(\Lambda)$ such that $\pi \varphi(z)=z$ on $L.$ Give $\embd(L)$ the uniform topology ($\sup$ metric).    We say that a compact set $L\subset \Lambda$ is \emph{evenly covered} if the corresponding bundle can be written as a disjoint union: \begin{equation}\mathcal{B}(L)=\bigcup_{\varphi\in \embd(L)}\varphi(L). \end{equation}

\begin{thmm}[Local product structure] \label{nossa}{\it Let $\Lambda$ be a hyperbolic set of $\mathbf{f}_c.$ If $L\subset \Lambda$ is evenly covered, then there is a homeomorphism \begin{equation}\Phi:  L\times \embd(L) \to \mathcal{B}(L).\end{equation}
In addition,  if every point of $\Lambda$ has at least two images in $\Lambda$ and at least two pre-images in $\Lambda$ under $\mathbf{f}_c,$ then $\embd(L)$ is a Cantor set.}
\end{thmm}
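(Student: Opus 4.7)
The plan is to define the tautological map $\Phi(z,\varphi)=\varphi(z)$, check it is a continuous bijection, and promote it to a homeomorphism via compactness of $\embd(L)$. For the Cantor set claim, I will evaluate sections at a chosen base point $z_0\in L$ and identify $\embd(L)$ with the fibre $\mathcal{C}^{\pm}(z_0)$ of $\mathcal{B}(\Lambda)$ over $z_0$.

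Continuity of $\Phi$ is the standard continuity of evaluation under the sup metric on $\embd(L)$, and bijectivity is exactly the even covering hypothesis, yielding the inverse $(z,w)\mapsto (z,\varphi_{(z,w)})$. Since $\mathcal{B}(L)\subset\mathbb{C}^{2}$ is Hausdorff, it suffices to show $L\times\embd(L)$ is compact; then $\Phi$ is a continuous bijection from a compact space to a Hausdorff space, hence a homeomorphism.

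The main obstacle is compactness of $\embd(L)$, which I will obtain from Arzel\`a--Ascoli. Closedness in $C(L,\mathcal{B}(L))$ is immediate because uniform limits of $\pi$-equivariant continuous maps valued in the closed set $\mathcal{B}(L)$ remain of this form, and boundedness is automatic because $\mathcal{B}(L)\subset L\times r\overline{\mathbb{D}}$. The delicate point is equicontinuity of the whole family $\embd(L)$. Any $\varphi$ has the form $\varphi(z)=(z,r\sum_{n\ge 1}\delta^{n-1}z_n(z))$ for an orbit $(z_n(z))$ of $\mathbf{f}_c^{\pm}$ starting at $z$. Continuity of $\varphi$, combined with the disjointness of the disks around distinct branch values (Lemma \ref{naive}), forces $z_n(z')$ to arise from $z_n(z)$ by the same finite chain of branches once $z'$ is close enough to $z$. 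Since the holomorphic branches of $\mathbf{f}_c$ on a neighbourhood of $\Lambda$ admit a common Euclidean Lipschitz constant $M$, this gives $|z_n(z)-z_n(z')|\le M^{n}|z-z'|$ on initial iterates that remain in the linearisation domain. Given $\rho>0$, truncating the series at some $N$ with $r\operatorname{diam}(\Lambda)\sum_{n>N}\delta^{n-1}<\rho/2$ and then choosing $|z-z'|$ small enough to make $r\sum_{n\le N}\delta^{n-1}M^{n}|z-z'|<\rho/2$ yields a modulus of continuity independent of $\varphi$.

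For the Cantor set assertion, I would fix any $z_0\in L$ and consider the evaluation $\mathrm{ev}_{z_0}\colon\embd(L)\to\mathcal{C}^{\pm}(z_0)$, $\varphi\mapsto\pi_2\varphi(z_0)$. It is continuous by construction, injective because distinct sections have disjoint graphs and so cannot meet over $z_0$, and surjective because every $(z_0,w)\in\mathcal{B}(\{z_0\})\subset\mathcal{B}(L)$ lies on some section. Having already shown $\embd(L)$ compact, $\mathrm{ev}_{z_0}$ is a continuous bijection onto the Hausdorff space $\mathcal{C}^{\pm}(z_0)$, hence a homeomorphism. Under the hypothesis of at least two images and at least two preimages at every point of $\Lambda$, the fibre $\mathcal{C}^{\pm}(z_0)$ is a Cantor set, as remarked in \S\ref{bundles}, and so the same holds for $\embd(L)$.
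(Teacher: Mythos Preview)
Your construction of $\Phi(z,\varphi)=\varphi(z)$ and the bijectivity check coincide exactly with the paper's argument. The paper's proof is in fact much shorter than yours: after establishing that $\Phi$ is a continuous bijection it simply asserts ``Since $L$ is compact, $\Phi$ is a homeomorphism'' and ends---compactness of $\embd(L)$ is never justified, and the Cantor-set statement is not argued at all. So your Arzel\`a--Ascoli step and your identification of $\embd(L)$ with the fibre $\mathcal{C}^{\pm}(z_0)$ via $\mathrm{ev}_{z_0}$ are genuine additions, filling in what the paper leaves implicit.

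That said, your equicontinuity argument has a subtle circularity. You invoke continuity of a given section $\varphi$ to conclude that, once $z'$ is close enough to $z$, the branch chain producing $z_n(z')$ agrees with that producing $z_n(z)$ up to level $N$, after which the Lipschitz bound $|z_n(z)-z_n(z')|\le M^{n}|z-z'|$ applies. The difficulty is that the threshold ``close enough'' here is extracted from the modulus of continuity of the particular $\varphi$; nothing in the argument shows it can be taken uniformly over all $\varphi\in\embd(L)$. Because the branches of $\mathbf{f}_c^{\pm 1}$ relevant to $\mathcal{B}(\Lambda)$ are expanding ($M>1$), you cannot simply sum the Lipschitz estimates over all $n$ and avoid the truncation. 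As written, the modulus you produce is really $\min\bigl(\eta(\varphi,z,N),\,\rho/(2C_N)\bigr)$, with the first term still depending on $\varphi$. The same dependence then contaminates the Cantor-set part, since you use compactness of $\embd(L)$ to promote the continuous bijection $\mathrm{ev}_{z_0}$ to a homeomorphism. The gap is not fatal to the strategy; one route is to prove a uniform branch-agreement radius directly from the disjoint-disk geometry of Lemma~\ref{naive} (the separation there is uniform over $\Lambda$) together with the even-covering hypothesis, rather than through the continuity of individual sections.
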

\begin{proof} The map $\Phi: L\times \embd(L)\to  \mathcal{B}(L)$ given by  $(z,\varphi) \mapsto \varphi(z)$ is continuous and surjective. It is also injective, for if $\Phi(z,\phi)=\Phi(w,\psi)$ then 
\begin{equation}z=\pi \varphi(z) = \pi \psi(w)  = w. \end{equation} Hence  $\varphi(z)=\psi(z).$ Since $L$ is evenly covered, we have $\varphi=\psi.$ Since $L$ is compact, $\Phi$ is a homeomorphism. 
\end{proof}

{\normalfont

\subsection{Holomorphic motions}\label{asc} Let $A\subset \mathbb{C}^n$ and suppose $U\subset \mathbb{C}$ is open and connected. We say that $h:U\times A \to \mathbb{C}^n$ is a \emph{holomorphic motion} if there is $c_0\in U$ -- called the \emph{base point of $h$} -- such that \begin{itemize}\item[$(a)$] $h(c_0,\cdot)$ is the identity; \item[$(b)$] each $h(c,\cdot):A \to \mathbb{C}^n$ is injective, and  \item[$(c)$] $h(\cdot,z):U\to \mathbb{C}^n$ is holomorphic, for every $z\in A.$
 It is very common to denote $h_c=h(c,\cdot).$  
\end{itemize}
If $O$ is an open set containing the hyperbolic set $\Lambda$ of $\mathbf{f}_c^{\ep},$ then $\operatorname{cycles}(O,c)$  denotes the set of cycles of $\mathbf{f}_c^{\ep}$ which are contained in $O,$ where either $\ep =1$ if $\Lambda$ is expanding or $\ep=-1$ if $\Lambda$ is attracting. 
 \begin{mthmm}[Holomorphic motions]\label{giocco}{\it Let $\Lambda_{c_0}$ be a hyperbolic set of $\mathbf{f}_{c_0}.$  There is a unique holomorphic motion \begin{equation}h: U\times \mathcal{B}(\Lambda_{c_0}) \to \mathbb{C}^2\end{equation} based at $c_0$ such that }{\it
\begin{itemize}
 \item[$(a)$] for every $c\in U,$ the projection $\Lambda_c=\pi h_c (\mathcal{B}(\Lambda_{c_0}))$ is a hyperbolic set of $\mathbf{f}_c$ varying continuously with respect to $c \in U$ in the Hausdorff metric. 
 
 \item[(b)] for any $c\in U,$ $\Lambda_c$  is attracting $($resp. expanding$)$ iff $\Lambda_{c_0}$ is attracting $($resp. expanding$)$;
 \item[$(c)$]  the image of $h_c$  is a subset of $\mathcal{B}(\Lambda_c)$ which is invariant under  $f_c:\mathcal{B}(\Lambda_c) \to \mathcal{B}(\Lambda_c).$ We also have  $h_c\circ f_{c_0}=f_c\circ h_c,$ for every $c\in U;$

  \item[$(d)$] there is a constant $C_0>0$ such that   \begin{equation}|h_u(x) - h_{v}(x)| \leq C_0|u-v|, \end{equation} for every $u,v\in U$ and $x\in \mathcal{B}(\Lambda_{c_0});$
  \item[$(e)$] for every sufficiently small open set $O$ containing the union of all $\Lambda_c, \ c\in U,$ we have  \begin{equation}\Lambda_c \supset\operatorname{cycles}(O,c) \iff \Lambda_{c_0} \supset \operatorname{cycles}(O,c_0),\end{equation}  for every $c\in U;$ and
  
   \item[$(f)$]{if $c\in U,$ substituting $c$ for $c_0$  yields  a unique holomorphic motion }\begin{equation} g: U\times \mathcal{B}(\Lambda_c) \to \mathbb{C}^2 \end{equation} parameterized on the same $U$, based at $c,$  satisfying  $(a)$-$(d)$ and the inverse property: if $x\in \mathcal{B}(\Lambda_c)$ and $y=g_{c_0}(x)$ is in $\mathcal{B}(\Lambda_{c_0}),$ then $h_{c}(y)=x.$   \end{itemize}}

 \end{mthmm}
\begin{proof} See \S  \ref{xxx}. \end{proof}

\section{Applications of Theorem \ref{giocco} } \label{this}

In this section we show how holomorphic motions of the Cantor bundle  $\mathcal{B}(\Lambda)$  can projected on $\mathbb{C}$  to obtain branched holomorphic motions of $\Lambda.$ This branched holomorphic motion {satisfies} a very important quasiconformal property (Theorem \ref{ofd}) which is better described with the {concept of a shadow.}

Let $\Lambda$ be a hyperbolic set of $\mathbf{f}_c$ and let $\pi$ be the bundle projection. For a given $\theta\in (0, 2\pi),$ let  $\operatorname{sh}(\Lambda, \theta)$ denote {the set of all} $L\subset \Lambda$ such that $(a)$ $L=\pi(K)$ for some connected and compact set $K\subset \mathcal{B}(\Lambda);$ and $(b)$ there is $\theta_0$ such that \begin{equation} \label{puff}\theta_0 < \arg(z)<\theta_0 + \theta, \ \ z\in L,\end{equation} for some branch $\arg(z)$ of the multifunction ${\textbf{arg}}(z).$ By \eqref{notice}, this definition is independent of the particular $(r,\delta)$ that defines $\mathcal{B}(\Lambda).$ {Let $\operatorname{sh}(\Lambda)$  consist of all $L$ such that $L \in \sh(\Lambda, \theta),$ for some $\theta\in (0,2\pi).$ } The elements of $\operatorname{sh}(\Lambda)$ are the \emph{connected shadows} of $\Lambda.$

\begin{thmm}[Characterization of shadows]  \label{ghf}{\it Let  $\Lambda$ be a hyperbolic set of $\mathbf{f}_c.$ If $K\subset \mathcal{B}(\Lambda)$ is connected and projects onto a shadow $L\in \sh(\Lambda),$ then \begin{equation}\pi:K\to L\end{equation} is a homeomorphism. Hence the shadows of $\Lambda$ are precisely those connected sets $L$ contained in a sectorial piece of $\Lambda$ -- defined by \eqref{puff} -- for which $\embd(L)$ is nonempty.}
\end{thmm}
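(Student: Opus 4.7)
The plan is to reduce the theorem to injectivity of $\pi|_K$: continuity and surjectivity are automatic, and a continuous bijection from a compact space onto a Hausdorff space is a homeomorphism. The characterisation then follows at once. If $L$ is a shadow, then $(\pi|_K)^{-1}$ is a continuous section of $\pi$, hence an element of $\embd(L)$; conversely, given any connected $L$ contained in a sectorial piece with some $\varphi \in \embd(L)$, the set $K := \varphi(L)$ is compact, connected and projects onto $L$, so $L \in \sh(\Lambda)$.

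The heart of the argument is a connectedness-plus-sectoriality observation. Since $L$ sits inside a sector $S$ of angle $\theta < 2\pi$ that is simply connected and avoids $z = 0$, the $q$ branches $\varphi_1,\ldots,\varphi_q$ of $\mathbf{f}_c$ extend as pairwise distinct, single-valued holomorphic functions on $S$ (branches collide only at $z = 0 \notin A_r \supset \Lambda$). I would first show that $\pi f_c(z,w)$ depends only on $z$ for $(z,w) \in K$: the continuous map $(\pi, \pi f_c) \colon K \to L \times \Lambda$ takes values in the graph
\begin{equation*}
\{(z,u) : z \in L,\ u \in \mathbf{f}_c(z)\} = \bigsqcup_{i=1}^{q}\{(z,\varphi_i(z)) : z \in L\},
\end{equation*}
a disjoint union of $q$ clopen sheets, each homeomorphic to $L$ via the first coordinate. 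Connectedness of $K$ forces its image into a single sheet, so $\pi f_c(z,w) = \varphi_{i_1}(z)$ for a fixed index $i_1$ and every $(z,w) \in K$. By Lemma~\ref{semiconjugacy}(d) one has $w = r\sum_{n=1}^{\infty}\delta^{n-1}\pi f_c^{n}(z,w)$, so injectivity of $\pi|_K$ reduces to showing that each $\pi f_c^n|_K$ is a function of $z$ alone. I would attempt this inductively by replaying the graph-decomposition on each connected set $f_c^{n-1}(K)$, invoking the local product structure of $\mathcal{B}(\Lambda)$---every point has a neighbourhood in $\mathcal{B}(\Lambda)$ that is a local graph over $\Lambda$, so $\pi$ is a local homeomorphism---to supply locally distinct branches of $\mathbf{f}_c$ along $\pi f_c^{n-1}(K)$.

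The main obstacle is the inductive step, since the iterates $L_n := \pi f_c^n(K)$ need not themselves be sectorial: the correspondence inflates angular spread by the factor $p/q > 1$ per step, so the globally single-valued labelling of branches underlying the first step may be unavailable on $L_n$. To upgrade ``locally constant branch choice'' to ``globally constant'' along $f_c^{n-1}(K)$, I would combine the covering-map structure of $\pi|_K$ (coming from local product) with the fibre-direction expansion $\delta^{-1} > 1$ of the bundle map. If the orbits of two distinct fibre points $(z,w_1), (z,w_2) \in K$ agreed at every step then $w_1 = w_2$ directly, so they would have to first diverge at some minimal step $n_0 \geq 2$; at that step, $f_c^{n_0-1}(K)$ contains two points with a common first coordinate but different second coordinates whose images under $\pi f_c$ differ, contradicting the locally-clopen sheet decomposition of the graph of $\mathbf{f}_c$ around $z_{n_0-1}$ together with connectedness of $f_c^{n_0-1}(K)$. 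Reconciling the local product structure, the connectedness of the iterates $f_c^{n}(K)$, and the fact that sectoriality is only directly available at the base level is the most delicate part of the argument and where the bulk of the technical work lies.
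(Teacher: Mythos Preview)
Your overall strategy coincides with the paper's: reduce to showing that for every $n$ the map $\pi f_c^n|_K$ depends only on $z=\pi(\cdot)$, and then invoke Lemma~\ref{semiconjugacy}(d) to recover $w$ from $z$. Your first step is correct and is exactly what the paper does. The gap is in the inductive step.

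Your proposed resolution tries to run the clopen--connectedness argument on $f_c^{n-1}(K)$ over its projection $L_{n-1}$, using only a \emph{local} sheet decomposition of the graph of $\mathbf{f}_c$ near points of $L_{n-1}$. But a connected set can perfectly well meet different local sheets at different places once the sheets are not globally separated; so the existence of two points $x_{n_0-1},y_{n_0-1}\in f_c^{n_0-1}(K)$ with $\pi x_{n_0-1}=\pi y_{n_0-1}$ and $\pi f_c(x_{n_0-1})\neq\pi f_c(y_{n_0-1})$ does not by itself contradict connectedness of $f_c^{n_0-1}(K)$. The appeal to ``the covering-map structure of $\pi|_K$'' is circular, since that structure is precisely what you are trying to establish, and the fibre expansion by $\delta^{-1}$ plays no role here.

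The paper's fix is simple and sidesteps the loss of sectoriality entirely: never work on $L_n$. Instead, keep composing branches and stay over $L$. Concretely, choose a simply connected neighbourhood $U_1\supset L$ with $0\notin U_1$, pick the branch $\varphi_1$ of $\mathbf{f}_c$ on $U_1$ matching one point of $K$, and use your clopen--connectedness argument on $K$ to get $\pi f_c=\varphi_1\circ\pi$ on $K$. Now $\varphi_1(L)\subset\Lambda\subset\mathbb{C}^*$, so one can shrink to a simply connected $U_2\subset U_1$ with $0\notin\varphi_1(U_2)$; then $\mathbf{f}_c\circ\varphi_1$ has $q$ globally defined, pointwise distinct holomorphic branches on $U_2$, and the same clopen--connectedness argument applied to $K$ (not to $f_c(K)$) gives $\pi f_c^2=\varphi_2\circ\pi$ on $K$ for one such branch $\varphi_2$. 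Iterating yields $\varphi_n:U_n\to\mathbb{C}$ with $\pi f_c^n=\varphi_n\circ\pi$ on $K$ for every $n$, and Lemma~\ref{semiconjugacy}(d) then gives the explicit inverse $\Phi(z)=\bigl(z,\,r\sum_{n\geq1}\delta^{n-1}\varphi_n(z)\bigr)$ of $\pi|_K$. The point is that simple connectivity of $U_n$ (not of $\varphi_{n-1}(U_n)$) is what guarantees global single-valuedness of the next composed branch, so sectoriality is only needed once, at the base.
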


\begin{proof}  Let $\Omega$ be the unbounded connected component of $\hat{\mathbb{C}} - L.$  Since $\hat{\mathbb{C}}-\Omega$ is the union of $L$ with all bounded components of $\mathbb{C} -L,$ it follows that the complement of $\Omega$ is compact and connected. Hence $\Omega$ is simply connected. Thus $\Omega$ is conformally isomorphic to the open unit disk $\mathbb{D}.$  Since $L\subset \Lambda,$ $0\not \in L.$ Since $L$ is compact and sectorial, it follows that $0\in \Omega.$  Using the Riemann mapping $\mathbb{D} \to \Omega$ we conclude that there is a simply connected set $U_1$ containing $L$ such that $0\not \in U_1.$ Let $g_n: K \to \Lambda$ be defined by $g_n(x)=\pi f^n(x),$ where $f:\mathcal{B}(\Lambda) \to \mathcal{B}(\Lambda)$ is the bundle map. By Lemma \ref{semiconjugacy} $(b)$ we have \begin{equation} g_n(x) \xrightarrow{\mathbf{f}_c^{\ep}} g_{n+1}(x),\end{equation}

\nid for every $x\in \mathcal{B}(\Lambda).$ There is no loss of generality in assuming  $\Lambda$ is expanding and $\ep=1.$ Now fix $x_0\in \mathcal{B}(\Lambda).$  Recall that if $\varphi: U\to \mathbb{C}$ is a holomorphic map defined on a simply connected domain $U$ with $0\not \in \varphi(U),$ then for every $z_0\in U$ and every  $w_0 \in \textbf{log}\, \varphi(z_0),$
there is a unique holomorphic
 branch $\varphi_1$ of $\textbf{log}\,\varphi(z)$ defined on $U$ such that $\varphi_1(z_0)=w_0.$ Hence there is a unique holomorphic branch $\varphi_1: U_1 \to \mathbb{C}$ of \begin{equation}\mathbf{f}_c(z)= \exp \frac{1}{q}\textbf{log}\,z^{p} + c\end{equation} such that $\varphi_1$ sends $g_0(x_0)$ to $g_1(x_0).$ The set $\{x\in K: \varphi_1\,(g_0(x)) = g_1(x)\}$ is therefore nonempty and closed. It is also open, as can be easily checked from the fact that there is $d>0$ such that $|w - w'|>d,$ for every $z\in \Lambda$ and every  $w, w' \in \mathbf{f}_c(z)$ with $w \neq w'.$ We conclude that $\varphi_1(g_0(x)) = g_1(x),$ for every $x\in K.$ Since $\varphi_1(L) \subset \Lambda,$ there is a smaller simply connected $U_2\subset U_1$  such that $L\subset U_2$ and  the closure of $\varphi_1(U_2)$ does not contain $0.$ There  is a holomorphic branch $\varphi_2$ of $\mathbf{f}_c(\varphi_1(z))$ defined on $U_2$ such that $\varphi_2$ sends $g_0(x_0)$ to $g_2(x_0).$ Again, the set of points $x\in K$ for which  $\varphi_2(g_0(x)) = g_2(x)$ is nonempty, open and closed in $K.$ Hence $\varphi_2 \circ g_0 = g_2$ on $K.$ The process can be repeated indefinitely so as to obtain a sequence of holomorphic maps $\varphi_n: U_n \to \mathbb{C} $
 such that $L\subset U_{n+1} \subset U_n, \ \varphi_n(z)  \xrightarrow{\mathbf{f}_c} \varphi_{n+1}(z),$ and  $\varphi_n \circ g_0(x) = g_n(x),$ for every  $z\in L$ and $x\in K.$ Let $\Phi: L \to K$ be defined by 
 \begin{equation}\Phi(z)= \left (z, r\sum_{n=1}^{\infty} \varphi_n(z) \delta^{n-1}\right). \end{equation} By Lemma \ref{semiconjugacy} $(d),$ \  $\Phi \circ \pi (x)= x,$ for $x\in K.$ Hence $\Phi$ is a surjective continuous map such that   $\pi \Phi(z) = z$ on $L.$ Therefore, $\Phi: L \to K$ is a homeomorphism. 
\end{proof}

\subsection{Branched holomorphic motions}\label{pld} Let $A\subset \hat{\mathbb{C}}$ and suppose  $U\subset \mathbb{C}$ is open and connected. Let $\mathbf{h}: U\times A \to \hat{\mathbb{C}}$ be a multifunction. For every $c\in U,$ let $\mathbf{h}_c:A \to \hat{\mathbb{C}}$ be the multifunction given by $\mathbf{h}_c(z)=\mathbf{h}(c,z).$  We say that $\mathbf{h}$ is a \emph{branched holomorphic motion} if there is $c_0\in U$ -- called the base point of $\mathbf{h}$ --, such that 
 \begin{itemize}\item[$(a)$] $\mathbf{h}_{c_0}(z)=\{z\},$ for every $z\in A;$
 \item[$(b)$] If $G_z(\mathbf{h})$ is the union of all $\{c\} \times \mathbf{h}_c(z)$ such that $c\in U,$ then there is a family $\mathcal{F}$ of holomorphic functions $f: U\to \mathbb{C}$ such that 
\begin{equation}\label{inthat}\bigcup_{z\in \Lambda}G_z(\mathbf{h}) = \bigcup_{f\in \mathcal{F}} \mathcal{G}_{f}
\end{equation}
where $\mathcal{G}_f$ is the graph of the function $f.$
 \end{itemize} 
 
 \nid \emph{Normal branched holomorphic motions} are those for which $\mathcal{F}$ in \eqref{inthat} is a normal family.
 If $\mathbf{h}:U\times A \to \hat{\mathbb{C}}$ is a (normal) branched holomorphic motion, then so is every restriction $\mathbf{h}|_{U\times B},$ where $B\subset A.$   Branched holomorphic motions were originally defined by  \cite{Lyubich2015} for automorphisms of $\mathbb{C}^2.$
\subsection{Solenoidal extension} Let $\Lambda_{c_0}$ be a hyperbolic set of $\mathbf{f}_{c_0}$ and  let $S$ be a subset of  $\mathcal{B}(\Lambda_{c_0}).$ We say that \begin{equation}[S,\Lambda_{c_0}]\end{equation} is a \emph{solenoidal extension} if  $S$ is invariant: $f_{c_0}(S) \subset S, $ where   $f_{c_0}: \mathcal{B}(\Lambda_{c_0}) \to \mathcal{B}(\Lambda_{c_0})$ is the bundle map. A shadow $L \in \sh(\Lambda_{c_0})$ is induced by $S$ if $L=\pi(K)$ for some compact and connected set $K\subset S.$  Consider the multifunction $\mathbf{h}: U \times \Lambda_{c_0} \to \mathbb{C}$ given by 
\begin{equation}\label{qwe}\mathbf{h}(c,z)= \pi\circ h_c \circ \left(\pi|_S\right)^{-1}(z), \end{equation} where $U$ is a bounded domain and  $h: U \times \mathcal{B}(\Lambda_{c_0}) \to \mathbb{C}^{2}$ is the unique holomorphic motion based at $c_0$ satisfying $(a)$-$(d)$ of Theorem \ref{giocco}.  According to the following theorem, $\mathbf{h}$ is unique up to domain extensions, and so it makes sense to define $\mathbf{h}$ as   \emph{the branched holomorphic motion induced by the solenoidal extension} $[S, \Lambda_{c_0}].$ 

\begin{thmm}[Normal BHM]   \label{efd}{\it  The map $\mathbf{h}$ induced by $[S,\Lambda]$ is a normal branched holomorphic motion. Moreover, $\mathbf{h}$ does not depend on the particular choice of $(r,\delta)$ in the definition of $\mathcal{B}(\Lambda)$ and $\mathbf{h}$ is unique up to extensions of $U.$
}
\end{thmm}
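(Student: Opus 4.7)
The plan is to verify the two axioms (a), (b) of a branched holomorphic motion together with normality directly from the definition, and then derive the $(r,\delta)$-independence and uniqueness up to extensions from the uniqueness clause of Theorem \ref{giocco}.

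To produce the witnessing family $\mathcal{F}$ for axiom (b), I would set $f_x(c) = \pi \circ h_c(x)$ for each $x \in S$, where $\pi(z,w)=z$. Each $f_x : U \to \mathbb{C}$ is holomorphic since $h(\cdot,x)$ is holomorphic by Theorem \ref{giocco} and $\pi$ is a coordinate projection. Axiom (a) is then immediate: because $h_{c_0}$ is the identity, for any $z \in \Lambda_{c_0}$ and any $x \in (\pi|_S)^{-1}(z)$ we have $\pi h_{c_0}(x) = \pi(x) = z$, so $\mathbf{h}_{c_0}(z) = \{z\}$. For axiom (b), a point $(c,w)$ lies in $G_z(\mathbf{h})$ iff $w = \pi h_c(x)$ for some $x \in S \cap \pi^{-1}(z)$, equivalently $(c,w) \in \mathcal{G}_{f_x}$ for that $x$; taking the union over $z \in \Lambda_{c_0}$ yields the desired identity $\bigcup_z G_z(\mathbf{h}) = \bigcup_{x \in S} \mathcal{G}_{f_x}$ with $\mathcal{F} = \{f_x : x \in S\}$. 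For normality, Theorem \ref{giocco}(d) together with $h_{c_0}(x)=x$ gives $|h_c(x)| \leq |x| + C_0|c-c_0|$. Since $U$ is bounded and $S$ is a subset of the compact set $\mathcal{B}(\Lambda_{c_0})$, the family $\{h_c(x) : c \in U,\ x \in S\}$ is uniformly bounded, so $\mathcal{F}$ is uniformly bounded on $U$ and normal by Montel's theorem.

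For the $(r,\delta)$-independence, I would apply Lemma \ref{thv} to identify the two Cantor bundles via a homeomorphism $\Phi$ satisfying $\pi \circ \Phi = \pi$. Inspection of the series formula \eqref{north} and the bundle-map formula \eqref{fde} shows that $\Phi$ is precisely the identity on the underlying orbit $(z_n)$ appearing in both series, so $\Phi$ conjugates the two bundle maps and $\Phi(S)$ is forward-invariant under the new bundle map. Invoking the uniqueness clause of Theorem \ref{giocco} applied to the candidate motion $\Phi \circ h \circ \Phi^{-1}$, one identifies it with the motion $h'$ associated to $(r',\delta')$. Since $\pi \circ \Phi = \pi$, projection yields the same multifunction $\mathbf{h}$. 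Uniqueness up to extensions is handled similarly: if two motions are induced by $[S,\Lambda_{c_0}]$ on connected open domains $U, U'$ both containing $c_0$, then on $U \cap U'$ both arise as projections of holomorphic motions supplied by Theorem \ref{giocco}, which agree by the uniqueness part of that theorem.

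The hard part will be the $(r,\delta)$-independence, since it requires checking that the identification $\Phi$ from Lemma \ref{thv} genuinely intertwines the two bundle dynamics in a way that permits invoking the uniqueness of the holomorphic motion $h$; verification of the BHM axioms and normality is then essentially bookkeeping combined with the Lipschitz estimate of Theorem \ref{giocco}(d) and Montel's theorem.
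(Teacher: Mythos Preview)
Your proposal is correct and follows essentially the same route as the paper: you build the witnessing family $\mathcal{F}=\{f_x:x\in S\}$ with $f_x(c)=\pi h_c(x)$, verify axioms (a) and (b) directly, obtain normality from the Lipschitz estimate of Theorem~\ref{giocco}(d) via Montel, and derive the $(r,\delta)$-independence and uniqueness from Lemma~\ref{thv} together with the uniqueness clause of Theorem~\ref{giocco}. The only difference is one of economy: the paper dispatches the $(r,\delta)$-independence in a single line by citing Lemma~\ref{thv}, whereas you route it through the conjugacy $\Phi\circ h\circ\Phi^{-1}$ and a second invocation of uniqueness---this works (the candidate does satisfy (a)--(d), with (d) holding for a new constant because each coordinate of $\Phi_c h_c(x)$ is a uniformly convergent series in the quantities $\pi h_c(f_{c_0}^n x)$), but it is more machinery than the paper needs, since $\pi\circ\Phi=\pi$ already shows that $\mathbf{h}$ depends only on the orbit data, not on the encoding.
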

\begin{proof} Let $\mathbf{h}: U \times \Lambda_{c_0} \to \mathbb{C}$ be induced by $\pi: S \to \Lambda_{c_0}.$   Then $\mathbf{h}_{c_0}(z)=\{z\},$ for every $z\in \Lambda_{c_0},$ where $\mathbf{h}$ is given by \eqref{qwe}. Consider the family $\mathcal{F}$ of all maps $f_{x,z}: U\to \mathbb{C}$ given by 
\begin{equation}f_{x,z}(c)=\pi h_c(x), \end{equation} where $\pi(x)=z\in \Lambda$ and $x\in S.$ By Theorem \ref{giocco} $(d),$ $\mathcal{F}$ is a bounded family of holomorphic functions on $U.$ Hence $\mathcal{F}$ is normal. We have \begin{equation}\bigcup_{z\in \Lambda}G_z(\mathbf{h})= \bigcup_{f\in \mathcal{F}} \mathcal{G}_{f}. \end{equation} It follows that every branched holomorphic motion  $\mathbf{h}$ induced by $[S, \Lambda_{c_0}]$ is normal.    By the uniqueness part of Theorem \ref{giocco}, there is only one  such $\mathbf{h}$ parameterized on $U.$   By Lemma \ref{thv}, $\mathbf{h}$ does not depend on the  $(r,\delta)$ that is used for $\mathcal{B}(\Lambda_{c_0}).$ \end{proof}

We say that a topological embedding $\varphi: L \to \mathbb{C}$ of a nonempty subset $L$ of the plane is a \emph{$K$-quasiconformal map} if

\begin{equation}
\sup_{z\in L} \limsup_{\ep \to 0} \frac{\sup_{|z-w|=\ep} |\varphi(z) - \varphi(w)|}{\inf_{|z-w|=\ep} |\varphi(z)-\varphi(w)|} \leq K.
\end{equation}
It is clear from Theorem \ref{giocco} that every branched holomorphic motion $\mathbf{h}: U \times \Lambda_{c_0} \to \mathbb{C}$ induced by a solenoidal extension of a hyperbolic set satisfies: for any $c\in U,$  $\mathbf{h}_c(\Lambda_{c_0}) = \Lambda_c$ is a hyperbolic set of $\mathbf{f}_c$ and  
\begin{equation}\disth(\Lambda_c, \Lambda_{c_0}) \leq C_0 |c-c_0|.\end{equation}

The quasiconformal properties of the branched holomorphic motion are given by 
 \begin{thmm}[Quasiconformality]   \label{ofd}{\it According to the notation of Theorem \ref{efd}, for every $\theta\in (0,2\pi)$ there is $r>0$ such that for every $$c\in D_{r}=\{|c-c_0| < r\} \subset U  \  \textrm{and}   \, \ L\in \sh(\Lambda_{c_0},\theta),$$
\begin{equation}\mathbf{h}_c(L)= \bigcup_{g\in \mathcal{H}} g_{c} (L)  \ \textrm {and}  \ g_c: L \to \mathbb{C} \  \textrm{is $K_{r}$-quasiconformal} \end{equation} where $K_r$ depends only on $r$ and   $\mathcal{H}$  is a family of holomorphic motions $g:D_{r}\times L \to \mathbb{C}$ such that $g_{c}(z) \in \mathbf{h}_c(z),$ for every $c\in D_r$ and $z\in L.$   }
 \end{thmm}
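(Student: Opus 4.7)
The plan is to decompose the branched motion $\mathbf{h}_c|_L$ into a disjoint union of ordinary (unbranched) holomorphic motions, one for each connected lift of $L$ inside $S$, and then invoke the classical quasiconformality of holomorphic motions of planar sets.

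First, given $L\in\sh(\Lambda_{c_0},\theta)$, I would let $\mathcal{K}(L)$ denote the collection of connected components of $(\pi|_S)^{-1}(L)$. By Theorem \ref{ghf}, each $K\in\mathcal{K}(L)$ satisfies that $\pi|_K:K\to L$ is a homeomorphism; write $\sigma_K=(\pi|_K)^{-1}$. The candidate family of the statement would be
\begin{equation}
\mathcal{H} = \{ g^K : K\in\mathcal{K}(L)\}, \qquad g^K(c,z) = \pi\circ h_c\circ\sigma_K(z).
\end{equation}
By the definition \eqref{qwe} of $\mathbf{h}$, one immediately has $\mathbf{h}_c(L)=\bigcup_{g\in\mathcal{H}} g_c(L)$ and $g_c^K(z)\in\mathbf{h}_c(z)$ for every $c$ and $z$.

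The next step is to check that each $g^K$, restricted to a suitable disk $D_r\subset U$, is an ordinary holomorphic motion based at $c_0$. The identity property at $c_0$ and holomorphy in $c$ are immediate from Theorem \ref{giocco}(c). The substantive point is injectivity of $g^K_c$: since $h_c$ is injective, this reduces to showing that $\pi|_{h_c(K)}$ is a homeomorphism, which by Theorem \ref{ghf} applied to the hyperbolic set $\Lambda_c$ from Theorem \ref{giocco}(a) holds as long as $\pi(h_c(K))$ stays in a sector of opening strictly less than $2\pi$. Here I would exploit the Lipschitz estimate $|h_c(x)-h_{c_0}(x)|\le C_0|c-c_0|$ from Theorem \ref{giocco}(d) together with the uniform lower bound $|z|\ge r_0>0$ on $\Lambda_{c_0}\subset A_{r_0}$ to get $|\arg\pi h_c(x)-\arg\pi(x)|\le C_0|c-c_0|/r_0$. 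Choosing $r\le r_0(2\pi-\theta)/(4C_0)$ then forces $\pi(h_c(K))$ into a sector of opening $<2\pi$, uniformly in $L$ and $K$.

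Finally, I would invoke Slodkowski's $\lambda$-lemma: any holomorphic motion $g:D_r\times L\to\mathbb{C}$ based at $c_0$ extends to a motion of $\hat{\mathbb{C}}$ whose slice $g_c$ is a quasiconformal homeomorphism with dilatation bounded by $(1+\rho(c))/(1-\rho(c))$, where $\rho(c)$ is the hyperbolic distance from $c$ to $c_0$ inside $D_r$. Shrinking $r$ if necessary to keep $\rho$ bounded away from $1$ produces the uniform constant $K_r$ promised in the statement, independent of the particular $L$ and $K$.

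The main obstacle is the uniform choice of $r$: a single radius must simultaneously work for every $L\in\sh(\Lambda_{c_0},\theta)$ and every lift $K\in\mathcal{K}(L)$, so that the sectorial hypothesis of Theorem \ref{ghf} is preserved under $h_c$ and injectivity of $\pi|_{h_c(K)}$ is secured. Because both $C_0$ and the lower bound $r_0$ on $|z|$ are uniform over $\mathcal{B}(\Lambda_{c_0})$, this is tractable; the most delicate bookkeeping is tracking how the opening angle of the enclosing sector can inflate under the motion and ensuring it remains strictly less than $2\pi$.
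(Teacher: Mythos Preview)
Your decomposition of $\mathbf{h}_c|_L$ into ordinary holomorphic motions indexed by lifts of $L$ into $S$ is exactly what the paper does (the paper phrases it as $g(c,z)=\pi\circ h_c\circ\phi(z)$ for $\phi\in\embd(L)$, but this is the same data). Your treatment of injectivity is in fact more careful than the paper's: the paper simply says ``Theorem~\ref{ghf} is therefore used to prove the injectivity of the holomorphic motion'' and leaves implicit the verification that $\pi(h_c(K))$ remains sectorial in $\Lambda_c$, which you supply via the Lipschitz bound from Theorem~\ref{giocco}(d).

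The genuine divergence is in the quasiconformality step. You invoke Slodkowski's extended $\lambda$-lemma to get a global quasiconformal extension with dilatation controlled by the hyperbolic radius. The paper instead runs the elementary Ma\~n\'e--Sad--Sullivan argument directly: for three points $z_1,z_2,z_3\in L$ with $|z_1-z_2|=|z_3-z_2|$, the map
\[
c\longmapsto \frac{g_c(z_1)-g_c(z_2)}{g_c(z_3)-g_c(z_2)}
\]
lands in $\hat{\mathbb{C}}\setminus\{0,1,\infty\}$ by injectivity, and the Schwarz lemma for hyperbolic surfaces bounds its modulus uniformly on any $D_r\Subset U$. This yields the dilatation bound $K_r$ without appealing to Slodkowski. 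The paper's route is more self-contained and avoids the heavier machinery; your route gives slightly more (an actual quasiconformal extension to $\hat{\mathbb{C}}$), at the cost of citing a deeper theorem. Both are correct and lead to the same uniform $K_r$.

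One small point of bookkeeping: in your last paragraph you choose $r$ once to preserve the sectorial condition and then speak of ``shrinking $r$'' for the dilatation bound. It is cleaner to note, as the paper does, that the motions $g^K$ are all defined on the full $U$ (injectivity may fail near $\partial U$, but holomorphy in $c$ does not), and then take $D_r\Subset U$ small enough to secure both the sectorial condition and the compact-containment needed for the Schwarz/Slodkowski bound.
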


 \begin{proof}
  Apply Theorem \ref{giocco}.  Each holomorphic motion $g\in \mathcal{H}$ is given by $g(c,z)=\pi \circ h_c\circ \phi(z)$ for some $\phi \in \mathop{\mathrm{emb}}{(L)}.$ Theorem \ref{ghf}  is therefore used to prove the injectivity of the holomorphic motion. In order to prove  quasiconformality with uniform constant, fix arbitrary $z_0, z_1$ and $z_3$ in $L$ with $|z_1-z_2| = |z_3-z_2|=\ep.$ Consider the holomorphic function  between hyperbolic Riemann surfaces $f: U \to V$, where $V= \hat{\mathbb{C}} - \{0, 1, \infty\}$  and \begin{equation} f(c) = \frac{g_c(z_1) - g_c(z_2)}{ g_c(z_3) - g_c(z_2)}.  \end{equation} Since each $g_c$ is injective, the range of $f$ is indeed contained in $V.$ By the Schwarz Lemma,  $f$ does not increase the hyperbolic metric.  It is immediate that for every $r$ such that the closure of $D_r$ is contained in $U,$ there is  a constant $K_r$ depending only on $r$ such that $|f(c)| \leq K_r$ for $c\in D_r.$  This proves the $K_r$-quasiconformality.  \end{proof}
  
 We may apply Theorem \ref{giocco} for hyperbolic Julia sets $\Lambda_{c_0}=J_{c_0},$ obtaining hyperbolic sets $\Lambda_c$ close to $J_{c_0}.$    
Is it true that $J_c=\Lambda_c$?  The following theorem gives an answer.

\begin{thmm}[Branched motions of Julia sets]  \label{fds} {\it Suppose  $\Lambda_{c_0}=J_{c_0}$ is  a hyperbolic  set of $\mathbf{f}_{c_0}. $ Using the notation of Theorem \ref{giocco}, we have $J(f_c) \subset \mathcal{B}(\Lambda_c)$ and \begin{equation}h_c: J(f_{c_0}) \to J(f_c) \end{equation} is a homeomorphism, for every $c\in U.$  There is an open set $\Omega$ containing $J_{c_0}$ such that, for every $c\in U,$ \begin{equation}\mathbf{h}_c(J_{c_0}) = J_c \cap \Omega = \pi J(f_c),\end{equation} where  
 $\mathbf{h}: U \times J_{c_0} \to \mathbb{C} $ is the branched holomorphic motion induced by $[J(f_{c_0}), J_{c_0}].$ }\end{thmm}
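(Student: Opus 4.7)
The plan is to relate $\mathbf{h}_c(J_{c_0})$ to $\pi J(f_c)$ through the conjugacy of Theorem \ref{giocco}$(c)$, and then to identify $\pi J(f_c)$ with $J_c\cap \Omega$ for a suitably chosen neighbourhood $\Omega$, using property $(e)$ of Theorem \ref{giocco} together with local maximality of the hyperbolic Julia set $J_{c_0}$. The starting observation is that, because $\pi J(f_{c_0})=J_{c_0}$ by Lemma \ref{semiconjugacy}$(c)$, the fibre $(\pi|_{J(f_{c_0})})^{-1}(J_{c_0})$ is all of $J(f_{c_0})$, so the formula \eqref{qwe} collapses to $\mathbf{h}_c(J_{c_0})=\pi h_c(J(f_{c_0}))$. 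It then suffices to prove three things: $h_c(J(f_{c_0}))=J(f_c)$ with $h_c$ restricting to a homeomorphism, the inclusion $J(f_c)\subset \mathcal{B}(\Lambda_c)$, and the equality $\pi J(f_c)=J_c\cap \Omega$.

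For the first, Theorem \ref{giocco}$(c)$ gives $h_c\circ f_{c_0}=f_c\circ h_c$, so $h_c$ carries each period-$n$ point of $f_{c_0}$ injectively to a period-$n$ point of $f_c$; compactness of $\mathcal{B}(\Lambda_{c_0})$ and the injectivity of $h_c$ make $h_c|_{J(f_{c_0})}$ a homeomorphism onto its image in $J(f_c)$. For the reverse inclusion, I shrink $U$ and pick an open $\Omega\supset J_{c_0}$ with $\Lambda_c\subset \Omega$ for every $c\in U$. Since $J_{c_0}$ is a hyperbolic Julia set, local maximality yields a smaller neighbourhood containing no cycle of $\mathbf{f}_{c_0}^{\ep}$ outside $J_{c_0}$; shrinking $\Omega$ further so the hypothesis of Theorem \ref{giocco}$(e)$ applies, I obtain $\Lambda_c\supset \operatorname{cycles}(\Omega,c)$ for every $c\in U$. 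A periodic point of $f_c$ in its domain projects, via the form \eqref{fgc} of the bundle map, to a cycle of $\mathbf{f}_c^{\ep}$ in $\Omega$, hence in $\Lambda_c$; its unique lift (Lemma \ref{semiconjugacy}$(d)$) lies in $\mathcal{B}(\Lambda_c)$, so $J(f_c)\subset \mathcal{B}(\Lambda_c)$. The same plane-cycle is, by the implicit function theorem combined with property $(e)$, the unique continuous deformation of a cycle of $\mathbf{f}_{c_0}^{\ep}$ in $J_{c_0}$; the uniqueness clause of Theorem \ref{giocco} then forces the holomorphic deformation of its bundle lift to agree with $h_c$, yielding $h_c(J(f_{c_0}))=J(f_c)$.

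For the third identity, $\pi J(f_c)\subset J_c$ is immediate since projected periodic orbits are repelling cycles of $\mathbf{f}_c$ by hyperbolicity of $\Lambda_c$, and $\pi J(f_c)\subset \Lambda_c\subset \Omega$. For the reverse, I shrink $\Omega$ once more so that any cycle of $\mathbf{f}_c^{\ep}$ meeting $\Omega$ is entirely contained in it; every repelling cycle of $\mathbf{f}_c$ in $\Omega$ then lies in $\Lambda_c$ by the previous step and lifts uniquely into $\mathcal{B}(\Lambda_c)\subset J(f_c)$, so closure gives $J_c\cap \Omega\subset \pi J(f_c)$. Combined with $\mathbf{h}_c(J_{c_0})=\pi J(f_c)$, the triple equality follows. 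The hardest step is the compatibility in the middle paragraph: one must show that the two a priori distinct identifications of cycles of $\mathbf{f}_c$ near $J_{c_0}$ with cycles of $\mathbf{f}_{c_0}$ -- one via the bundle conjugacy through $h$, the other via the plane-level continuous deformation underlying Theorem \ref{giocco}$(e)$ -- produce the same correspondence. This rests on the uniqueness clause in Theorem \ref{giocco} and the holomorphic dependence of periodic points on the parameter. A secondary technical point is the choice of $\Omega$, which must simultaneously contain every $\Lambda_c$, activate $(e)$, witness local maximality of $J_{c_0}$, and be robust enough that cycles meeting $\Omega$ lie entirely inside.
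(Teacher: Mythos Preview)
Your argument is correct and lands on the same overall structure as the paper's: reduce $\mathbf{h}_c(J_{c_0})$ to $\pi h_c(J(f_{c_0}))$, identify $h_c(J(f_{c_0}))$ with $J(f_c)$, and then match $\pi J(f_c)$ with $J_c\cap\Omega$ via property $(e)$. The difference is in how you obtain the bijection $h_c:J(f_{c_0})\to J(f_c)$ and the inclusion $J(f_c)\subset\mathcal{B}(\Lambda_c)$. The paper dispatches both in a single stroke by invoking Theorem~\ref{giocco}$(f)$: the inverse motion $g$ based at $c$ satisfies the same list $(a)$--$(d)$, so $g_{c_0}$ carries periodic points of $f_c$ to periodic points of $f_{c_0}$, and the inverse relation $h_c\circ g_{c_0}=\mathrm{id}$ gives surjectivity of $h_c$ onto $J(f_c)$ directly. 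The containment $J(f_c)\subset\mathcal{B}(\Lambda_c)$ is then read off from Lemma~\ref{semiconjugacy}$(c)$. You instead bypass $(f)$ entirely and rebuild its content by hand: projecting a periodic point of $f_c$ to a plane cycle, placing it in $\Lambda_c$ via $(e)$ and local maximality, lifting back via Lemma~\ref{semiconjugacy}$(d)$, and then matching it with $h_c$ through the uniqueness clause (which, at bottom, is the shadowing uniqueness of Lemma~\ref{sdw}). This is legitimate and makes the mechanism more transparent, but it duplicates work already packaged in $(f)$; the paper's route is shorter precisely because $(f)$ was designed to absorb this step.
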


 \begin{proof} Let $h: U\times \mathcal{B}(J_{c_0}) \to \mathbb{C}^2$ be the  holomorphic motion given by Theorem \ref{giocco} and let $\Lambda_c=\pi h_c(J_{c_0}).$  By Theorem  \ref{semiconjugacy} $(c),$ $J(f_c) \subset \mathcal{B}(\Lambda_c),$  for every $c\in U.$ It follows from Theorem  \ref{giocco} $(f)$ that  \begin{equation}h_c: J(f_{c_0}) \to J(f_c)\end{equation} is a bijection. Hence the branched holomorphic motion $\mathbf{h}$ induced by $[J(f_{c_0}), J_{c_0}]$ satisfy $\mathbf{h}_{c}(J_{c_0}) \subset J_c \cap \Omega.$ The other inclusion is obtained from Theorem \ref{giocco} $(e).$  \end{proof}

  \subsection{Remark} \label{dfg} A similar statement of Theorem \ref{fds} is valid for hyperbolic dual Julia sets.
  Notice that if  $J_c$ is connected then $\mathbf{h}_c(J_{c_0})= J_c \cap \Omega$ implies $\mathbf{h}_c(J_{c_0})=J_c.$ This stronger result also holds when $J_{c_0}$ satisfies $(\star):$ for every $\ep >0$ there is $\delta >0$ such that  \begin{equation}J_c \subset \{z\in \mathbb{C}: d_e(z, J_{c_0})< \ep \},\end{equation} whenever $d_e(c, c_0) < \delta,$ where $d_e$ denotes Euclidean distance.  (In this case, $J_c \subset \Omega.)$

\section{A detailed analysis of Julia sets for parameters close to zero }
\nid Theorems \ref{ofd}, \ref{fds} and Remark \ref{dfg} may be applied to the Julia sets $J_c$ for $c$ close to zero, since:

\begin{thmm}\label{hooi} {\it The Julia set $J_0 = \mathbb{S}^1$ is hyperbolic and satisfies $(\star)$ of Remark \ref{dfg}. }

\end{thmm}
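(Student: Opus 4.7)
The plan is to verify three things in order: (i) that $J_0 = \mathbb{S}^1$ as sets; (ii) that $\mathbb{S}^1$ is an expanding hyperbolic set of $\mathbf{f}_0$; and (iii) that the upper-semi-continuity property $(\star)$ of Remark \ref{dfg} holds at $c_0 = 0$.

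For (i), I would use that for $c=0$ the defining relation $w^q = z^p$ forces $|w| = |z|^{p/q}$, so along any cycle $z_0 \to z_1 \to \cdots \to z_n = z_0$ one obtains $|z_0|^{(p/q)^n} = |z_0|$; since $p/q > 1$, this forces $|z_0| \in \{0,1\}$. The cycle $\{0\}$ is attracting by the convention fixed in \S\ref{obsec}, so every repelling cycle lies on $\mathbb{S}^1$. Writing $z = e^{i\theta}$, the branches of $\mathbf{f}_0$ restrict to the finite family of expanding affine maps $\theta \mapsto (p\theta + 2\pi m)/q$ on $\mathbb{R}/2\pi\mathbb{Z}$ for $m = 0, \dots, q-1$; the resulting expanding Markov-type dynamics produces a dense set of periodic orbits of all sufficiently large periods, each with multiplier $(p/q)^n > 1$. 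Hence $J_0 = \mathbb{S}^1$.

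For (ii), I would take $V = A_r$ with $r<1/2$ and use the cylindrical conformal metric $\mu(z)|dz| = |dz|/|z|$. For any branch $\varphi(z) = \zeta z^{p/q}$ of $\mathbf{f}_0$ with $\zeta^q=1$ a direct computation gives
\begin{equation}
\|\varphi'(z)\|_\mu = \frac{|z|}{|\varphi(z)|}\,|\varphi'(z)| = \frac{|z|}{|z|^{p/q}} \cdot \frac{p}{q}\,|z|^{p/q-1} = \frac{p}{q} > 1,
\end{equation}
independent of $z$ and of the branch. This is condition (a) of \eqref{nice}, so $\mathbb{S}^1$ is expanding.

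For (iii), given $\varepsilon > 0$ the goal is to produce $\delta > 0$ such that every cycle of $\mathbf{f}_c$ with $|c|<\delta$ lies in $\{1-\varepsilon < |z| < 1+\varepsilon\}$; since $J_c$ is the closure of repelling cycles, this yields $(\star)$. The \emph{outer estimate} uses $|z_{k+1}| \geq |z_k|^{p/q} - |c|$: Bernoulli's inequality shows that for $|z_k| \geq 1 + \varepsilon$ and $|c|$ small in terms of $\varepsilon$, one has $|z_{k+1}| - 1 \geq \tfrac12((p/q)+1)(|z_k|-1)$, so the modulus grows geometrically and the orbit escapes. The \emph{inner estimate} proceeds in two substeps. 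First, fix $\rho_0 \in (0,1)$ small enough that $|\varphi'(z)| = (p/q)|z|^{p/q-1}$ is strictly bounded below $1$ on $\{|z| < \rho_0\}$ and, for $|c|$ small, $\mathbf{f}_c(\{|z|<\rho_0\}) \subset \{|z|<\rho_0\}$; any cycle meeting this disk then has multiplier $<1$ and so cannot contribute to $J_c$. Second, on the compact annulus $\rho_0 \leq |z| \leq 1-\varepsilon$ the upper bound $|z_{k+1}| \leq |z_k|^{p/q} + |c|$ combined with the positive lower bound for $r - r^{p/q}$ on $[\rho_0, 1-\varepsilon]$ forces $|z_k|$ to decrease by a uniform amount each step, eventually entering $\{|z|<\rho_0\}$ and hence ruling out any cycle in that intermediate annulus.

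The main obstacle is the \emph{inner} estimate: unlike the outer side, naive radial contraction fails near $0$ because of both the critical point and the constant term $c$. The resolution is the contracting-disk argument above, which exploits that the branch derivatives scale like $|z|^{p/q - 1}$ and vanish at the critical point, producing a small disk on which every branch of $\mathbf{f}_c$ is a uniform contraction; the remaining compact piece of $\{|z|<1\}$ is then handled by uniform radial contraction, and the two estimates combine to give $(\star)$.
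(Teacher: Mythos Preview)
Your proposal is correct and follows essentially the same approach as the paper. The paper's proof also identifies $J_0=\mathbb{S}^1$ via the relation $|w|=|z|^{p/q}$ and density of roots of $z^{q^n}=z^{p^n}$, and handles $(\star)$ by the same radial modulus estimates: an outer threshold $s_c=(1+|c|)^{1/(p/q-1)}$ beyond which $|w|>|z|$, and an inner argument using the roots $r_c<R_c$ of $g_c(x)=x^{p/q}-x+|c|$ on which $|w|<|z|$, together with a fixed small disk $B(\rho)$ in which all cycles are attracting. Your two-step inner estimate (contracting disk $\{|z|<\rho_0\}$ plus uniform decrease of modulus on the compact annulus $[\rho_0,1-\varepsilon]$) is exactly the same mechanism, just with the thresholds packaged differently; in fact you make the forward invariance of the small disk explicit, whereas the paper leaves it implicit. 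One small addition on your side: you supply the hyperbolicity verification via the cylindrical metric $|dz|/|z|$, which the paper does not spell out in this proof.
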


\begin{proof}  First we prove that $J_{0}=\mathbb{S}^1.$  The iterates of $\mathbf{f}_0$ converge to infinity on the unbounded component of $\mathbb{C} - \mathbb{S}^1.$ On the other hand, the iterates of $\mathbf{f}_0$ converge to zero on  $\mathbb{D}.$ Hence every cycle of $\mathbf{f}_0$ which is neither $0\mapsto 0$ nor $\infty\mapsto \infty$ must be contained in $\mathbb{S}^{1}.$ The points of a cycle in $\mathbb{S}^1$  are given by solutions of the equation $z^{q^n}= z^{p^n},$ which become dense in $\mathbb{S}^1$ as $n$ increases. Since every cycle in $\mathbb{S}^1$ is repelling, it follows that $J_0=\mathbb{S}^1.$

Now we are going to prove that  $J_0$ satisfies $(\star).$  Let $0<r_c< R_c< 1$ be the  solutions of $g_c(x)=0$ on $(0, \infty),$ where \begin{equation}g_c(x)=x^{p/q} -x +|c|,\end{equation} for $|c|>0$ sufficiently small.  Since $g_c <0$ on the interval $(r_c, R_c),$ it can be shown that 

\nid {\bf (1)} whenever $z\xrightarrow{\mathbf{f}_c}w$ and $ r_c < |z| < R_c,$ we have $|w| < |z|;$  \\ {\bf (2)} there is also  $0<\rho < 1$ such that every cycle  in $B(\rho)=\{|z|<\rho\}$ is attracting. \\  If $s_c = (1+|c|)^{1/\gamma},$ where $\gamma=p/q-1,$ then whenever $z\not \in D(s_c)=\{|z|\leq s_c\}$ and $z\xrightarrow{\mathbf{f}_c}w,$ we have  \begin{equation}
|w| \geq |w-c| -|c| = |z|^{p/q} -|c| = |z|(|z|^{\gamma}-1) -|c| +|z| > |z|.
\end{equation}
As $|c|\to 0,$ we have  \begin{equation}0< r_c < \rho < R_c < 1 < s_c, \end{equation}
\begin{equation}r_c \to 0+, ~R_c \to 1-,~ s_c\to 1+,\end{equation} 
\begin{equation}\mathbf{f}_c(D(r_c)) \subset B(\rho).\end{equation}

\nid 
Now assume $z_0 \mapsto z_1 \mapsto \cdots \mapsto z_n=z_0$ is a cycle of $\mathbf{f}_c.$ By {\bf (1)} and {\bf (2)}, if one point of this cycle belongs to $B(R_c),$ then the whole cycle must be contained in $B(\rho).$ Hence, all cycles contained in $B(R_c)$ are attracting. If one of the points of a cycle, say $z_0,$ is in $\hat{\mathbb{C}}- D(s_c),$ then $z_i=\infty$ for all $i.$ We conclude that $J_c\subset D(s_c) - B(R_c).$  Property $(\star)$ of Remark \ref{dfg} follows when $c\to 0.$  \end{proof}

\subsection{Geometric construction of the solenoid $\mathcal{B}(\mathbb{S}^1)$} \label{ksc} It can be shown that $\mathcal{B}(J_0)$ is the limit set of a finite system of maps $u_k$ on the solid torus $\mathbb{T}.$
\nid For  every $k\in \mathbb{Z},$ let   $u_k:\storus \circlearrowleft$ be defined by 
\begin{equation} \label{hey} u_k(\mathrm{e}^{it}, z) = \left (\exp i\left({\frac{q}{p}t + \frac{2k\pi}{p}}\right),~\delta z + {r\mathrm{e}^{it}}\right ),  \ \ t\in [0, 2\pi), \  z\in \overline{\mathbb{D}}. \end{equation}

For any $A\subset \mathbb{T},$ let  \begin{equation}\omega(A)=\bigcup_{k=0}^{p} u_k(A).\end{equation}
It is possible to show that $\mathcal{B}_{r,\delta}(\mathbb{S}^1) $ is given as the nested intersection of all compact sets $\omega^n(\mathbb{T}).$ Each connected component of $\mathcal{B}(\mathbb{S}^1)$ in this case is a connected solenoid given as the inverse limit of $z\mapsto z^{q/d}$ on $\mathbb{S}^1,$ where $d=\gcd \{p,q\}.$ None of these facts is used in this paper, but this construction helps to visualize the solenoidal structure of $\mathcal{B}(J_0).$  When $q=2, p=1$ and $c=0,$ we have the correspondence $w^2=z,$ and the Cantor bundle $\mathcal{B}(\mathbb{S}^1)$ is the well-known Smale-Williams solenoid (the inverse limit of $z\mapsto z^2$ on $\mathbb{S}^1$).

  {\subsection{Symbolic construction of the solenoid. }}
 
 {Consider the maps $\theta_k: \mathbb{R} \to \mathbb{R}$ given by  $\theta_k(t) =pt/q + 2\pi k/q,$ for $0\leq k < q.$ }

  {An equivalent symbolic description of $\mathcal{B}(\mathbb{S}^1)$ is obtained from  $g:\mathbb{R}\times \Sigma_q \to \mathbb{C}^2, $ where $ \Sigma_q= \{0, \ldots, q-1 \}^{\mathbb{N}_0}$ and}
 
  $$g(t, \tau)= \left(e^{it}, r \sum_{n=1}^{\infty} \delta^{n-1} e^{i \theta_{k_n} \circ \cdots \circ \theta_{k_0} (t)} \right), $$ 
 where $\tau=(k_n)$ is any sequence in $\Sigma_q.$

The relation $\sim_g$ on $\mathbb{R} \times \Sigma_q$ is determined  by $x\sim_g y$ iff $g(x)=g(y).$
\begin{thmm} \label{int} The map $g: \mathbb{R} \times \Sigma_q \to \mathcal{B}(\mathbb{S}^1)$ is a continuous surjection which descends to a homeomorphism $\mathbb{R}\times \Sigma_q/\sim_g \to \mathcal{B}(\mathbb{S}^1).$ The restriction $g: [0, 2\pi)\times \Sigma_q \to \mathcal{B}(\mathbb{S}^1)$ is bijective.
\end{thmm}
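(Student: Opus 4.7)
The plan is to verify continuity, surjectivity, bijectivity on $[0,2\pi)\times\Sigma_q$, and the quotient homeomorphism in sequence.

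Continuity of $g$ follows from the Weierstrass $M$-test: each summand has modulus one while $|\delta|<1$, so the series converges uniformly in $(t,\tau)$, and each term depends jointly continuously on $(t,\tau)$ (in the product topology on $\Sigma_q$, the dependence on $\tau$ is through a finite initial block). For surjectivity, take $(z,s)\in\mathcal{B}(\mathbb{S}^1)$; by \S\ref{bundles} one may write $s=r\sum\delta^{n-1}z_n$ for some forward orbit $z=z_0\xrightarrow{\mathbf{f}_0}z_1\xrightarrow{\mathbf{f}_0}\cdots$ in $\mathbb{S}^1$. Fix any lift $t_0\in\mathbb{R}$ of $z$; inductively, if $t_n\in\mathbb{R}$ is a lift of $z_n$, the $q$ values $\{\theta_k(t_n):k=0,\ldots,q-1\}$ project bijectively onto $\mathbf{f}_0(z_n)$, so the successor $z_{n+1}$ singles out a unique $k_n$ and one sets $t_{n+1}=\theta_{k_n}(t_n)$. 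The resulting $\tau=(k_n)\in\Sigma_q$ satisfies $g(t_0,\tau)=(z,s)$.

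For the bijectivity of $g$ on $[0,2\pi)\times\Sigma_q$: if $g(t,\tau)=g(t',\tau')$ with $t,t'\in[0,2\pi)$, then $e^{it}=e^{it'}$ forces $t=t'$; the uniqueness of the series expansion in $\mathcal{C}^+(e^{it})$ established in \S\ref{bundles} gives $z_n=z_n'$ for every $n$; and the algorithm from the surjectivity argument then recovers $\tau$ from the orbit uniquely. Surjectivity onto $\mathcal{B}(\mathbb{S}^1)$ is immediate since every lift $t_0$ may be translated into $[0,2\pi)$ with a matching choice of $\tau$.

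The delicate step is to show that $g$ descends to a homeomorphism $\tilde g:(\mathbb{R}\times\Sigma_q)/\sim_g\to\mathcal{B}(\mathbb{S}^1)$, equivalently that $g$ is a quotient map. The strategy is to restrict to the compact subset $K=[0,2\pi]\times\Sigma_q$ (with $\Sigma_q$ compact by Tychonoff): the restriction $g|_K$ is a continuous surjection onto the compact Hausdorff space $\mathcal{B}(\mathbb{S}^1)$ (compactness from Lemma \ref{semiconjugacy}), hence automatically a quotient map. For any $U\subset\mathcal{B}(\mathbb{S}^1)$ with $g^{-1}(U)$ open in $\mathbb{R}\times\Sigma_q$, the intersection $g^{-1}(U)\cap K=(g|_K)^{-1}(U)$ is then open in $K$, forcing $U$ to be open; together with continuity of $g$ this makes $g$ a quotient map, so $\tilde g$ is a homeomorphism. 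The main obstacle is precisely this final matching of topologies: it rests on $g|_K$ meeting every $\sim_g$-fiber, which is guaranteed by the bijectivity of the preceding step.
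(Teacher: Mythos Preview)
Your proof is correct and follows essentially the same strategy as the paper: both reduce the quotient-homeomorphism claim to the compact slice $K=[0,2\pi]\times\Sigma_q$, where a continuous surjection onto the Hausdorff space $\mathcal{B}(\mathbb{S}^1)$ is automatically closed and hence a quotient map. Your write-up is more explicit than the paper's (you spell out the $M$-test for continuity, the lifting algorithm for surjectivity, and the quotient-map verification), but the underlying ideas---in particular the uniqueness of $k\in\{0,\dots,q-1\}$ realizing a given image $e^{it}\xrightarrow{\mathbf{f}_0}e^{is}$ and the passage through the compact fundamental domain---are identical.
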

 \begin{proof} The continuous and surjective map $g$ descends to a homeomorphism since the  quotient $\mathbb{R}\times \Sigma_q/ \sim_g$ can be also given by the quotient of a compact space, namely,  $[0, 2\pi] \times  \Sigma_q/ \sim_g.$ Using the definition of Cantor bundles and the property: {\it if $e^{it} \xrightarrow{\mathbf{f}_c}  e^{is}$ then there is only one $k$ modulo $q$ such that $s =\theta_k(t),$} we conclude that $g:[0, 2\pi) \times \Sigma_q \to \mathcal{B}(\mathbb{S}^1)$ is bijective.  
    \end{proof}
 \nid A similar construction gives  $\mathcal{B}(\mathbb{S}^1)$ as a quotient of $X=\{0, \ldots, q-1\}^{\mathbb{Z}}.$ Consider  $f: X \to \mathcal{B}(\mathbb{S}^1)$ given by $$f(\ldots, l_2, l_1, k_0, k_1, \ldots) =g(t, (k_0, k_1, \ldots)),$$
where $t= 2\pi \sum_{1}^{\infty} l_j/ q^{j}.$   The map $f$ is a continuous surjection  which descends to a homeomorphism  $X/\sim_f \to \mathcal{B}(\mathbb{S}^1).$ 
 
 {\subsection{Expanding maps}} \label{vcb}\nid We say that a covering map $f: (X,d) \to (X,d)$ of a compact metric space $X$ is \emph{expanding} if there is a constant $C>1$ such that  \begin{equation}\label{mns} d(f(x) , f(y)) > C \,d(x,y) \end{equation}
locally on $X.$ (There is a more general definition of distance expanding maps, but the above definition agrees with the standard terminology when $f$ is a covering map.) Since $X$ is compact,  there is $\delta>0$ such that the local branches of $f^{-1}$ on evenly covered sets of diameter $\delta$ contract by $1/C,$ and   \eqref{mns} holds on every ball of radius $\delta.$ 
 A continuous map $f: X \to X$ is said to be \emph{locally eventually onto} (shortly, a \textit{leo} map) if, for every  nonempty open subset $U$ of $X,$ there is $n>0$ such that $f^n(U)=X.$ (Hence every leo map is topologically mixing).  

It is well known that if $f: (X,d) \to (X,d)$ is an expanding leo map, then $X$ is the closure of the periodic points of $f.$ 

\subsection{The leo property for correspondences} \nid Given $\Lambda \in \inv(A_r,c),$ let $\mathbf{f}_{c,\Lambda}: \Lambda \to \Lambda$ be given by $\mathbf{f}_{c,\Lambda} (z)  =\mathbf{f}_{c}(z) \cap \Lambda.  $  We may define $\mathbf{f}_{c,\Lambda}^{-1}$ in a similar way.  
If $\Lambda$ is a hyperbolic repeller of $\mathbf{f}_c,$ then $\mathbf{f}_c$ is \textit{leo} on $\Lambda$ when every point of $\Lambda$ has a neighbourhood $O$ such that $\mathbf{f}^n_{c,\Lambda}(O \cap \Lambda)=\Lambda,$ for some $n>0.$ 
If $\Lambda$ is a hyperbolic attractor, then $\mathbf{f}_c$ is leo  on $\Lambda$ when  every point of $\Lambda$ has a neighbourhood $O$ such that $\mathbf{f}_{c,\Lambda}^{-n}(O \cap \Lambda)=\Lambda,$ for some $n>0.$

  \begin{thmm}[Mixing hyperbolic sets] \label{jsd}{\it Let $\Lambda$ be either a hyperbolic attractor or a hyperbolic repeller of $\mathbf{f}_c$ and suppose $\mathbf{f}_c$ is leo on $\Lambda.$  Then  $f_{c}: \mathcal{B}(\Lambda) \to \mathcal{B}(\Lambda)$ is  leo and expanding  for any metric in the family
\begin{equation}d_s(x,y)=\sum_{n=0}^{\infty} s^{n} \left |\pi f_c^{n}(x) - \pi f_c^n(y) \right|,  \end{equation}
where $s\in (0,1).$ In particular, \begin{equation}\mathcal{B}(\Lambda)= J(f_c); \end{equation}
$\Lambda \subset J_c$ if $\Lambda$ is a hyperbolic repeller; and $\Lambda \subset J_c^*$ if $\Lambda$ is a hyperbolic attractor. } \end{thmm}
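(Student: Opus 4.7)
The plan is to argue in three stages: first, that $f_c$ is expanding in $d_s$; second, that it is leo on $\mathcal{B}(\Lambda)$; and third, that these two properties together with the semi-conjugacy force $\mathcal{B}(\Lambda)=J(f_c)$ and the stated inclusions $\Lambda\subset J_c$ or $\Lambda\subset J_c^*$. The starting point is the shift-type identity
\begin{equation*}
d_s(f_c(x),f_c(y)) \;=\; s^{-1}\bigl(d_s(x,y) - a_0\bigr), \qquad a_0 := |\pi(x)-\pi(y)|,
\end{equation*}
obtained by re-indexing the series defining $d_s$ and using $\pi f_c^{n}(f_c(x))=\pi f_c^{n+1}(x)$, so that expansion reduces to controlling $a_0/d_s(x,y)$ from above.

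I would exploit the hyperbolicity of $\mathbf{f}_c^{\epsilon}$ on $\Lambda$, where $\epsilon=1$ in the expanding case and $\epsilon=-1$ in the contracting case: equivalence of the conformal metric $\mu$ with Euclidean distance on the compact set $\Lambda$ gives constants $\lambda>1$ and $\eta_0>0$ such that whenever $z,w\in\Lambda$ satisfy $|z-w|<\eta_0$, they are related by a common branch $\varphi$ of $\mathbf{f}_c^{\epsilon}$ with $|\varphi(z)-\varphi(w)|\geq\lambda|z-w|$. Writing $a_n=|\pi f_c^n(x)-\pi f_c^n(y)|$, induction gives $a_n\geq\lambda^n a_0$ while $a_n$ remains below the threshold $\eta_0$; let $N$ be the first decoupling index. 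The geometric lower bound $d_s(x,y)\geq a_0\sum_{n=0}^{N}(s\lambda)^n$ then yields, when $s\lambda<1$, the clean estimate $a_0\leq(1-s\lambda)d_s(x,y)$ and hence $d_s(f_c(x),f_c(y))\geq\lambda\,d_s(x,y)$. When $s\lambda\geq 1$, the bound $a_N\leq 2\,\mathrm{diam}(\Lambda)$ forces $N\to\infty$ as $a_0\to 0$, so a direct computation gives $a_0/d_s(x,y)\to 0$ as $d_s(x,y)\to 0$, yielding the asymptotic expansion factor $s^{-1}>1$; in either case $f_c$ is locally expanding by some $C>1$.

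For the leo property I would combine this expansion with the leo hypothesis on $\Lambda$: given any open $U\subset\mathcal{B}(\Lambda)$, expansion forces some $f_c^k(U)$ to contain a $d_s$-ball of prescribed radius, and $\pi f_c^{k+m}(U)=\Lambda$ for some $m$ by hypothesis, whereupon the local product structure of Theorem \ref{nossa} together with the injectivity of $f_c$ from Lemma \ref{semiconjugacy}(a) lifts this to $f_c^{N}(U)=\mathcal{B}(\Lambda)$. The classical fact for expanding leo maps then gives $\mathcal{B}(\Lambda)=\overline{\mathrm{Per}(f_c)}=J(f_c)$; and the semi-conjugacy of Lemma \ref{semiconjugacy}(b) together with the multiplier computation \eqref{kcm} sends $\mathrm{Per}(f_c)$ to repelling (respectively attracting) cycles of $\mathbf{f}_c$ in $\Lambda$, giving $\Lambda=\pi J(f_c)\subset J_c$ (or $J_c^*$) as required. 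The main technical obstacle is the uniform control of $a_0/d_s(x,y)$: the behaviour genuinely depends on whether $s\lambda$ lies below or above $1$ and requires balancing the hyperbolically controlled initial segment against the bounded tail.
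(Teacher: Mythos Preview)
Your overall strategy is sound and the three-stage plan mirrors the paper's. The route to expansion, however, is genuinely different. The paper does not use the forward shift identity and the ensuing case split on $s\lambda$; instead it constructs explicit local inverses $g_i$ of $f_c$ from the contracting branches $\varphi_i$ of $\mathbf{f}_c^{-1}$ (respectively $\mathbf{f}_c$) via
\[
g_i(x)=\Bigl(\varphi_i(\pi x),\ r\sum_{n\ge 0}\delta^{n}\pi f_c^{n}(x)\Bigr),
\]
so that $d_s(g_i x,g_i y)=|\varphi_i(\pi x)-\varphi_i(\pi y)|+s\,d_s(x,y)$, and then bounds this by the single constant $C^{-1}=\lambda(1-s)+s<1$ (here $\lambda<1$ is the contraction factor of $\varphi_i$), uniformly in $s\in(0,1)$. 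This buys a one-line contraction estimate and simultaneously exhibits the covering structure of $f_c$, whereas your forward approach must separately handle the regimes $s\lambda<1$ and $s\lambda\ge1$. The leo step and the final passage to $\mathcal{B}(\Lambda)=J(f_c)$ and $\Lambda\subset J_c$ (or $J_c^*$) are essentially the same in both arguments.

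There is a slip in your $s\lambda\ge1$ case. The assertion that ``$a_N\le 2\,\mathrm{diam}(\Lambda)$ forces $N\to\infty$ as $a_0\to0$'' does not follow: combining $a_N\le D$ with your lower bound $a_N\ge\lambda^{N}a_0$ yields only $\lambda^{N}\le D/a_0$, an \emph{upper} bound on $N$. What actually drives $N\to\infty$ is the metric itself: since $a_N\ge\eta_0$ at the decoupling index, one has $d_s(x,y)\ge s^{N}a_N\ge s^{N}\eta_0$, hence $N\ge \log(\eta_0/d_s(x,y))/\log(1/s)\to\infty$ as $d_s(x,y)\to0$. The same observation is needed in your $s\lambda<1$ case, where the ``clean'' inequality $a_0\le(1-s\lambda)\,d_s(x,y)$ is only the $N=\infty$ limit of $d_s\ge a_0\sum_{n=0}^{N}(s\lambda)^{n}$; for finite $N$ you must again invoke $N\to\infty$ locally to recover an expansion factor strictly larger than $1$. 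With this correction your argument goes through, but the paper's inverse-branch computation sidesteps the whole dichotomy.
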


\begin{proof} Suppose $\Lambda$ is a leo  hyperbolic repeller of $\mathbf{f}_c,$  and fix a metric $d_s.$ Let $X=\mathcal{B}(\Lambda)$ and $x\in X.$  There is an open set $U\subset  X$ containing $x$ such that $\mathbf{f}_c^{-1}(\pi(U))$ can be written as a disjoint union $\cup_{i} \varphi_i (\pi(U)),$ where $\varphi_i$ is a branch of $\mathbf{f}_c^{-1}$ defined on a neighbourhood of $\pi(x)$ on which \begin{equation} |\varphi_i(y) - \varphi_i(z)| < \lambda |y-z|,\end{equation} for some constant $\lambda < 1 $ that depends only on the hyperbolic set $\Lambda.$  Let $1/C=\lambda(1-s) +s.$ Then   $C>1,$ and by Lemma \ref{semiconjugacy} $(d),$ each  $g_i:U \to X$ given by \begin{equation}g_i(x)=\left ( \varphi_i(\pi(x)), r\sum_{n=0}^{\infty} \delta^n \pi f_c^n(x) \right ) \end{equation} is a branch of $f_c^{-1}.$  The backward invariance of hyperbolic repellers is used to prove that $g_i$ is well defined. Since the  sets $g_i(U)$ are disjoint and cover $f_c^{-1}(U),$ we conclude that $f_c$ is a covering map. If $x,y \in U,$ then  by  definition of $d_s,$ we have
\begin{equation}d_s(g_i(x), g_i(y))  < \lambda (d_s(x,y) -sd_s(x,y))+ s d_s(x,y) = \frac{1}{C} d_s(x,y). \end{equation} 

\nid Hence $f=f_c,$  $C$ satisfy  \eqref{mns} locally on $X,$ and $f_c$ is an expanding map. 

Let $U$ be any nonempty open subset of $X.$ By the definition of $d_s$ and the uniform expansion of the branches of $\mathbf{f}_c$ on $\Lambda$,  for any $x\in U$  there is $n_0>0$ and a neighbourhood $O\subset \mathbb{C}$ of $\pi f_c^{n_0}(x)$ such that \begin{equation}f_{c}^{n_0}(U) \supset \left (\pi|_{X} \right)^{-1}(O\cap \Lambda).  \end{equation} Since $\mathbf{f}_c$ is leo, there is  $n>0$ such that $\mathbf{f}_{c,\Lambda}^{n}(O\cap \Lambda)=\Lambda.$ Then $f_c^n$ maps $(\pi|_{X})^{-1}(O\cap \Lambda)$ onto $X.$ Therefore,
$f_c^{n_0 +n}(U) = X.$ This proves $f_c$ is leo. 
In the case of hyperbolic attractors $\Lambda$ of $\mathbf{f}_{c},$ we notice that  $\Lambda$  may be viewed as a hyperbolic repeller of $\mathbf{f}_c^{-1}.$ Thus the same arguments extend to this case.

Since $f_c: X \to X$ is expanding and  leo, $X$ is the closure of the periodic points of $f_c: X \to X.$ Hence $\Lambda$ is the closure of the union of all cycles contained in $\Lambda.$ Since every such cycle is either repelling (if $\Lambda$ is expanding) or attracting (if $\Lambda$ is attracting ), we conclude that $\Lambda\subset J_c$ or $\Lambda \subset J_c^*,$ respectively.
 \end{proof}

\subsection{Quasiconformal deformations of the universal cover }
Let $ \mathrm{e}^{iI} =\{\mathrm{e}^{it}: t\in I\}.$
We say that a family of curves $\gamma_c: \mathbb{R} \to \mathbb{C}$ parameterized on a region $U$ containing $0$  is a \emph{$K$-quasiconformal deformation of the universal cover of $\mathbb{S}^1$} if  \begin{itemize} \item[(a)]  $\gamma_0(t) = \mathrm{e}^{it}, \ t\in \mathbb{R};$
\item[(b)] for each $t\in \mathbb{R},$ the map $c\mapsto \gamma_{c}(t)$ is holomorphic on $U;$
\item[(c)] for every $t_0\in \mathbb{R},$ there is an open interval $I$ containing $t_0$ and a $K$-quasiconformal homeomorphism $\varphi:\mathrm{e}^{iI} \to L,$ for some $L \subset \mathbb{C},$  such that $\gamma(t)=\varphi(\mathrm{e}^{it})$ on $I.$
\end{itemize}

By the Schwarz Lemma (see the proof of  Theorem \ref{ofd}), property $(c)$  follows from $(a)$ and $(b)$ under the weaker assumption that $\varphi$ is injective. Notice that $K$ does not depend on $\varphi.$ We often indicate the family $\{\gamma_c\}$ by $\gamma: U \times \mathbb{R} \to \mathbb{C}.$

{Recall that $J_0$ is the unit circle. By Theorem \ref{int}, there is a continuous surjective map $g: \mathbb{R}\times \Sigma_q \to \mathcal{B}(\mathbb{S}^1)$ such that $\mathbb{R}\times \Sigma_q/ \sim_{g}$ is homeomorphic to the solenoid $\mathcal{B}(\mathbb{S}^1).$  Using this identification, every line   $\mathbb{R} \times \{\tau\}$ projects to a curve $\ell_{\tau}: \mathbb{R} \to \mathcal{B}(\mathbb{S}^1)$ in the quotient space.  The union of the images of such curves is the solenoid. For a fixed $\tau$ in $\Sigma_q,$ the holomorphic  motion $h_c: \mathcal{B}(\mathbb{S}^1) \to \mathbb{C}^2$  (Theorem \ref{giocco}) determines a family  $\gamma^{\tau}: U \times \mathbb{R} \to \mathbb{C}$ given by \begin{equation}\label{lfv} \gamma^{\tau}(c, t)= \pi \circ h_c \circ \ell_{\tau}(t),\end{equation}}

\nid {  \nid which by the following result is a family of $K$-quasiconformal deformations. 
\begin{thmm}[Parameterizations of the Julia set] \label{kks}There is a neighbourhood      $U \subset \mathbb{C}$ of zero and $K \geq 1$ such that every $\gamma^{\tau}$ in \eqref{lfv}  is $K$-quasiconformal deformation of the universal cover of $\mathbb{S}^1,$ parameterized on $U,$ with 
 \begin{equation}\label{sss} J(\mathbf{f}_c ) =\bigcup_{\tau \in \Sigma_{q}} \gamma^{\tau}_{c}(\mathbb{R}). \end{equation} Moreover,  $\mathcal{B}(J(\mathbf{f}_c))=J(f_c),$ and $J(\mathbf{f}_c)$ is a {leo} hyperbolic repeller, for every $c\in U.$  \end{thmm}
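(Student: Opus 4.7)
The plan is to combine the symbolic parameterisation of $\mathcal{B}(\mathbb{S}^1)$ from Theorem \ref{int} with the holomorphic motion of Theorem \ref{giocco}, and to extract the promised uniform quasiconformality from Theorem \ref{ofd}.

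First I would use Theorem \ref{hooi} to note that $J_0=\mathbb{S}^1$ is a hyperbolic Julia set satisfying property $(\star)$, and apply Theorem \ref{giocco} at $c_0=0$ to obtain the holomorphic motion $h:U\times \mathcal{B}(\mathbb{S}^1)\to \mathbb{C}^2$ based at zero. Since $\mathbf{f}_0$ is leo on $\mathbb{S}^1$ (the map $e^{it}\mapsto \{e^{i\theta_k(t)}\}$ expands angles by $p/q>1$), Theorem \ref{jsd} yields $\mathcal{B}(\mathbb{S}^1)=J(f_0)$. Combining Theorem \ref{fds} with Remark \ref{dfg} and $(\star)$, one has $\mathbf{h}_c(\mathbb{S}^1)=J_c=\pi h_c(\mathcal{B}(\mathbb{S}^1))$ for every $c\in U$, together with $J(f_c)=h_c(\mathcal{B}(\mathbb{S}^1))$. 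To upgrade $J_c$ to a leo hyperbolic repeller I would transfer the expanding and leo properties of $f_0$ on $J(f_0)$ to $f_c$ on $J(f_c)$ through the conjugacy $h_c\circ f_0=f_c\circ h_c$ of Theorem \ref{giocco}(c), and then push the leo property down through the semi-conjugacy $\pi$. The full backward invariance $\mathbf{f}_c^{-1}(J_c)=J_c$ is immediate at $c=0$, since $(w-0)^q=z^p$ with $|z|=1$ forces $|w|=1$, and it propagates to a small $U$ because pre-images under $\mathbf{f}_c$ stay close, via $(\star)$, to the pre-images of $\mathbb{S}^1$ under $\mathbf{f}_0$. A second application of Theorem \ref{jsd} then gives $\mathcal{B}(J_c)=J(f_c)$.

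Next I would check the three defining properties of a $K$-quasiconformal deformation of the universal cover. Property (a) is immediate since $h_0$ is the identity and $\pi\circ g(t,\tau)=\mathrm{e}^{it}$. Property (b) follows because $c\mapsto h_c(x)$ is holomorphic for each fixed $x\in \mathcal{B}(\mathbb{S}^1)$ and $\pi$ is linear. For (c), fix $t_0\in\mathbb{R}$ and take a short open interval $I\ni t_0$ with $|I|<2\pi$; the arc $L=\{\mathrm{e}^{it}:t\in I\}$ lies in a proper open sector, hence $L\in\sh(\mathbb{S}^1,\theta)$ for some $\theta\in(0,2\pi)$. The assignment $\phi:\mathrm{e}^{it}\mapsto \ell_\tau(t)=g(t,\tau)$ is a well-defined continuous section of $\pi$ over $L$, and therefore an element of $\embd(L)$ by Theorem \ref{ghf}. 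Theorem \ref{ofd} now supplies a disk $D_r\subset U$ and a constant $K_r\geq 1$ such that $g_c(z)=\pi h_c\phi(z)$ is $K_r$-quasiconformal on $L$; since $\gamma^\tau_c(t)=g_c(\mathrm{e}^{it})$ on $I$, property (c) holds with a single $K=K_r$ independent of $\tau$ and of the base point $t_0$.

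Finally, \eqref{sss} follows from the bijectivity part of Theorem \ref{int}: for each fixed $t$, the set $\{\ell_\tau(t):\tau\in\Sigma_q\}$ is exactly the fibre $\pi^{-1}(\mathrm{e}^{it})\cap \mathcal{B}(\mathbb{S}^1)$, so
\begin{equation*}
\bigcup_{\tau\in \Sigma_q}\gamma^\tau_c(\mathbb{R})=\pi h_c(\mathcal{B}(\mathbb{S}^1))=\mathbf{h}_c(\mathbb{S}^1)=J_c.
\end{equation*}
I expect the main obstacle to be obtaining a \emph{uniform} quasiconformality constant $K$ together with backward invariance of $J_c$: both require restricting to a sufficiently small parameter disk about $c_0=0$, the former via the hyperbolic-metric estimate built into the proof of Theorem \ref{ofd}, and the latter via property $(\star)$ of Remark \ref{dfg} and the perturbative comparison of pre-image sets with those of $\mathbf{f}_0$.
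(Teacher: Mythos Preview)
Your overall strategy coincides with the paper's: establish that $\mathbb{S}^1$ is a leo hyperbolic repeller, apply Theorem~\ref{jsd} at $c_0=0$ to get $\mathcal{B}(\mathbb{S}^1)=J(f_0)$, use Theorem~\ref{fds} together with $(\star)$ to identify $J_c$ with $\pi J(f_c)$, transfer the leo property through the conjugacy $h_c$ and then down through the semi-conjugacy $\pi$, and finally extract uniform quasiconformality from Theorem~\ref{ofd}. Your verification of properties (a)--(c) of the quasiconformal deformation and of \eqref{sss} via Theorem~\ref{int} is correct, and in fact more explicit than the paper's own account.

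The one genuine gap is your argument for the backward invariance $\mathbf{f}_c^{-1}(J_c)=J_c$, which is what makes $J_c$ a \emph{repeller} and so licenses the second application of Theorem~\ref{jsd}. Your proposed perturbative route (``pre-images under $\mathbf{f}_c$ stay close, via $(\star)$, to the pre-images of $\mathbb{S}^1$ under $\mathbf{f}_0$'') does not work as stated: property $(\star)$ says only that $J_c$ is contained in a thin neighbourhood of $\mathbb{S}^1$, not that this neighbourhood is contained in $J_c$, so knowing that some $\zeta\in\mathbf{f}_c^{-1}(z)$ is near $\mathbb{S}^1$ does not by itself place $\zeta$ inside $J_c$. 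The paper closes this with a degree count that uses machinery you already have in hand: the conjugacy $h_c$ forces $f_c:J(f_c)\to J(f_c)$ to be a covering of the same degree as $f_0$ on $\mathcal{B}(\mathbb{S}^1)$, and that degree equals $\operatorname{card}\mathbf{f}_c^{-1}(z)$ for $z\in J_c$. Since the semi-conjugacy $\pi$ sends the fibre $f_c^{-1}(x)\cap J(f_c)$ into $\mathbf{f}_c^{-1}(\pi(x))$ and the cardinalities match, \emph{every} pre-image of $z=\pi(x)$ under $\mathbf{f}_c$ already lies in $\pi J(f_c)=J_c$. Replace your perturbative sentence with this counting argument and the proof goes through.
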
}

{In  \eqref{sss} the family $\gamma^{\tau}_{0}:\mathbb{R} \to \mathbb{S}^1$ consists of just one curve (a great coincidence). Each $\tau$ determines a distinct quasiconformal deformation of the same curve. As a result we obtain the solenoidal structure apparent in Figure \ref{jdf}.  }

{We now turn to the proof of Theorem \ref{kks}.}

\begin{proof} Using the contraction of the local branches of $\mathbf{f}_0^{-1}$ on $\mathbb{S}^1$ we conclude that there is a constant $\ep >0$ such  that, for any $z\in \mathbb{S}^1$ and $n>0,$ there is a neighbourhood $U$ of $z$ such that \begin{equation}\mathbf{f}_0^n(U)\supset B_e(y, \ep),\end{equation} for any   $y\in \mathbf{f}_c^n(z).$ Since the  $\mathbf{f}_{0}^{n}(z)$ is determined by the roots $w$ of the equation $w^{q^n}=z^{p^n},$ which become dense in $\mathbb{S}^1$ as $n\to \infty,$ we conclude that $J_{0}=\mathbb{S}^1$ is a leo hyperbolic repeller of $\mathbf{f}_0.$  By Theorem \ref{jsd}, $\mathcal{B}(J_0)=J(f_{c_0})$ is expanding and leo under the action of the bundle map $f_{c_0}.$ By Theorem \ref{fds}, $f_c: J(f_c) \to J(f_c)$ is a leo expanding map of degree $q=\operatorname{card} f_c^{-1}(x),$ for every $x\in J(f_c).$  Since the bundle projection is a sort of semi-conjugacy, it follows that  all $q$ pre-images of any point in  $J(\mathbf{f}_c)$ are contained in $J(\mathbf{f}_c).$  Since $f_c$ is leo on $J(f_c),$ $\mathbf{f}_c$ is leo on $J(\mathbf{f}_c)= \pi J(f_c)$ (cf. Theorem \ref{fds}).  Since $J(\mathbf{f}_c)$ is a leo hyperbolic repeller, from Theorem \ref{jsd} we have $\mathcal{B}(J(\mathbf{f}_{c_0})) = J(f_c).$

      {
Using the notation of Theorem \ref{ofd}, for each fixed $\tau$, $\{\gamma_{c}^{\tau} \}_{c\in D_r}$ is a $K$-quasiconformal deformation of the universal cover of  $\mathbb{S}^1$ satisfying \eqref{sss}, 
  for every $c\in D_r.$ Notice that  $K$ does not depend on $c$ and $\tau$ provided $U$  is replaced by a region compactly contained in $U$.}      \end{proof}

    \subsection{Other  examples }\label{xvc} Are there more examples than $J_c$ for $c$ close to zero?  We now  give examples for a dense set of parameters $c$ in $\mathbb{S}^1$  based on results that will be published on a separate paper.  For such examples, dual Julia sets are stable Cantor sets obtained from Conformal Iterated Function Systems. (Such Cantor sets and the corresponding CIFS's are discussed in \cite{Carlos} with a slightly different notation).     
       
    \paragraph{Dual Julia sets.}   It can be proved that $J_0^*=\{0\}$ and $J_c^*$ is a Cantor set avoiding $0,$ for every $c$ in a punctured neighbourhood $U -\{0\}$ of zero. In this case the bundle projection is a homeomorphism from $\mathcal{B}(J_c^*)$ to $J_c^*$, and Theorem \ref{giocco} may be used to obtain a (non-branched) holomorphic motion  of $J_c^*,$ for every  $c\neq 0$ close to zero.

\paragraph{Julia sets.} For $d\geq 2,$ the branches of $(w-c)^{2} = z^{2d} $
 are $w=z^d +c $ and $w=-z^d + c.$  Let \begin{equation}\mathcal{S}_d =\{ \omega \in \mathbb{C}: \omega^{d-1}=-1\} \subset \mathbb{S}^1. \end{equation}  
Suppose $d\geq 2$ and  let $c_0\in \mathcal{S}_d.$ Let $R_d$ be the greatest real root of $x^d -x -1=0.$ Then $R_d \in (1,2)$ and \begin{equation}D_d(\infty)=\{z\in \mathbb{C}: |z| > R_d \} \end{equation} is an invariant basin of $\infty$ which contains $\{ z\in \mathbb{C}: \ |z| \geq 2\}.$   The first two iterates of $0$ yields a 2-cycle  and an orbit escaping to infinity: 

\begin{equation} 0 \mapsto c \mapsto c^n + c =0, \end{equation}
\begin{equation} 0 \mapsto c \mapsto  - c^n + c = -2c^n \in D_n(\infty). \end{equation}

Based on these facts, it is possible to show that: $J_{c_0}^*=\{0,c\}$ and $J_c^*$ is a Cantor set around the cycle $0\mapsto c \mapsto 0,$ for every $c$ in a punctured neighbourhood $V- \{c_0\}.$ For any $c\in V - \{c_0\},$ $J_c$ is hyperbolic and satisfies $(\star)$ of Remark \ref{dfg}. We therefore have a dense set of examples on the unit circle for which Theorems \ref{ofd} and \ref{fds} may be applied. For $d=2,$ and $c=-1 \in \mathcal{S}_{2},$ $J_c$ contains the Julia set of $z^2 -1$ (Basilica) and the Julia set of $-z^2 -1$ (a Cantor set, since the orbit of its critical point tends to infinity). So even for simple cases like $(w+1)^2= z^4$ the structure of the Julia set can be a very complicated fractal set contained in a disk of radius $2.$ Here we observe an intriguing phenomenon:  {\it each point of $\mathcal{S}_d$ lies at the center of a component of  the interior of the Multibrot set $\mathcal{M}_d$ of the family $z^d+c,$ }suggesting that the above examples may  be extended to  every point at the center of a component of the interior of $\mathcal{M}_d.$

\section{Proof of main Theorem \ref{giocco}}

\label{xxx}

 \nid We shall present a detailed proof only when $\Lambda_{c_0}$ is expanding, for every attracting hyperbolic set of $\mathbf{f}_{c_0}$  is an expanding hyperbolic set of $\mathbf{f}_{c_0}^{-1}. $  \subsection{Preliminary  assumptions}  Let $\mu(z)|dz|$ be a conformal metric on a neighbourhood $V$ of  $\Lambda_{c_0}$ satisfying \eqref{nice} $(a).$ Since any two conformal metrics are equivalent on compact sets, we choose an open set $V_0$ for which there is $\ell>1$ such that $\Lambda_{c_0} \subset V_0 \subset \overline{{V}}_0 \subset V$ and   \begin{equation} \label{poi}\frac{1}{\ell} d_e(x,y) \leq d_{\mu}(x,y) \leq \ell d_e(x,y),\end{equation} 
for any $x,y \in V_0,$ where $d_e$ denotes Euclidean distance and $d_{\mu}$ is the distance function coming from $\mu(z)|dz|.$
For a given $z\in \mathbb{C},$ let \begin{equation}\operatorname{sep}(\mathbf{f}_c,z)\end{equation} denote the infimum of all distances $d=|x-w|$  with $(w,x)\in \mathbf{f}_{c}(z) \times \mathbf{f}_{c}(z) \cup \mathbf{f}_{c}^{-1}(z) \times \mathbf{f}^{-1}_{c}(z)$ and  $w\neq x.$
Let $B_{\mu}(z,s)$ denote ball of radius $s$ and center $z\in V_0$ for the metric $d_{\mu}.$ The corresponding Euclidean ball is denoted by $B_{e}(z,s).$
Let \begin{equation}V(s)=\{z\in V_0: d_{\mu}(z, \Lambda_{c_0}) < s  \}. \end{equation}

\begin{lem}\label{(1)} {\it We may choose  $\ep>0$ and $\lambda \in (0,1)$ such that: \begin{enumerate} \item 
 $V(6\ep) \subset V_0$; \item   for every $z\in V_0$ and $c$ in \label{estimate} \label{sjl} 
    $U=\left\{c\in \mathbb{C}: 
  \ell|c-c_0| < \frac{\ep(1-\lambda)}{6}\right\}, $  we have  \begin{equation}\operatorname{sep}(\mathbf{f}_c,z) > 5\ep \ell;\end{equation}  \item  if $\varphi$ is a branch of $\mathbf{f}_c^{-1}$  at $z$ such that $\varphi(z) \in V_0,$ then 
$\|\varphi'(z)\| < \lambda,$  the domain of $\varphi$ includes $B_{\mu}(z, 6\ep)$ and 
         \begin{equation} d_{\mu}(\varphi(x), \varphi(y)) < \lambda d_{\mu}(x,y),\end{equation} for $x,y \in B_{\mu}(z,6\ep);$ 
         
         \item Let  $u\in U, \ z\in V_0$ and suppose  $\varphi$ is a branch of $\mathbf{f}_{u}^{-1}$ defined on $B_{\mu}(z, 6\ep)$ such that  $\varphi(z)\in V_0.$ If $v\in U,$  the translation  \begin{equation}\label{ijk}T_v(\varphi)(x)=  \varphi(x + u -v)\end{equation}  is a well defined holomorphic branch of $\mathbf{f}_v^{-1}$ whose domain includes  $B_{\mu}(z, 5\ep).$ 
\item  Suppose $u,v\in U.$ Let  $z\in V_0$  and let $\varphi$ be a branch of $\mathbf{f}_{u}^{-1}$  defined at $z$ such that $\varphi(z) \in V_0.$ Then 
\begin{equation}d_{\mu}(T_v\varphi(x), T_v\varphi(y)) < \lambda d_{\mu}(x,y), \end{equation} 
for every $x,y\in B_{\mu}(z, 5\ep).$ 
         \end{enumerate}
 }
  \end{lem}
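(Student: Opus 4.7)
The plan is to choose constants in the order $\lambda$, then $\ep$, then $U$: first fix $\lambda\in(0,1)$ strictly larger than the hyperbolic contraction rate of $\mathbf{f}_{c_0}^{-1}$ on $V_0$, next shrink $\ep$ to handle the geometric clauses (1)--(3), and finally observe that the definition $\ell|c-c_0|<\ep(1-\lambda)/6$ automatically controls the parameter shifts needed for (4)--(5). A key ingredient, exploited in (4) and (5), is that branches of $\mathbf{f}_c^{-1}$ for different values of $c$ are literal translates of one another.

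By the hyperbolicity hypothesis and compactness of $\Lambda_{c_0}$, there is some $\lambda_1\in(0,1)$ such that every branch of $\mathbf{f}_{c_0}^{-1}$ contracts by at most $\lambda_1$ in the metric $\mu$, uniformly on a relatively compact neighborhood $V_0\subset V$ of $\Lambda_{c_0}$ chosen with a definite margin around $\Lambda_{c_0}$ (so that small Euclidean shifts keep $V_0$ inside $V$). Set $\lambda:=(1+\lambda_1)/2$. Clause (1) then holds for all sufficiently small $\ep$ by openness of $V_0$. Clause (2) follows from joint continuity of $\operatorname{sep}(\mathbf{f}_c,z)$ in $(c,z)$, which is strictly positive on $\{c_0\}\times\overline{V_0}$ since $\overline{V_0}$ avoids the branch point $z=0$; compactness and continuity in $c$ yield a uniform lower bound on $U\times V_0$ for $U$ small. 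For (3), I would invoke the $C^1$-continuity of the branches of $\mathbf{f}_c^{-1}$ in the parameter $c$ (since these branches are explicitly shifts of a single multifunction, as explained below), combined with the contraction bound for $\mathbf{f}_{c_0}^{-1}$, to obtain a bound by $\lambda$ for all $c$ sufficiently close to $c_0$; the extension of the branch to $B_\mu(z,6\ep)$ uses clause (2) and the implicit function theorem for $\ep$ small.

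The key structural observation for (4) and (5) is that if $\psi$ is a fixed branch of the multifunction $\xi\mapsto\{\zeta:\zeta^p=\xi^q\}$, then $\varphi(z)=\psi(z-u)$ is a branch of $\mathbf{f}_u^{-1}$, and consequently
\begin{equation*}
T_v\varphi(x)=\varphi(x+u-v)=\psi(x-v)
\end{equation*}
is the corresponding branch of $\mathbf{f}_v^{-1}$. A triangle-inequality computation using $\ell|u-v|<\ep(1-\lambda)/3<\ep$ shows that $x\in B_\mu(z,5\ep)$ implies $d_\mu(x+u-v,z)<5\ep+\ep=6\ep$, so $x+u-v\in B_\mu(z,6\ep)\subset \mathop{\mathrm{Dom}}(\varphi)$, giving clause (4).

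For (5), the naive estimate $d_\mu(T_v\varphi(x),T_v\varphi(y))=d_\mu(\varphi(x+u-v),\varphi(y+u-v))<\lambda\, d_\mu(x+u-v,y+u-v)$ fails, because $d_\mu$ is not translation-invariant and the Euclidean comparison costs a factor $\ell^2$. The trick is instead to apply clause (3) directly to $T_v\varphi$, viewed as a branch of $\mathbf{f}_v^{-1}$, at the shifted base point $x_0:=z+v-u$: there $T_v\varphi(x_0)=\varphi(z)\in V_0$, and the margin built into $V_0$ ensures $x_0\in V_0$ since $d_\mu(x_0,z)\le\ell|v-u|<\ep$. Clause (3) then yields contraction by $\lambda$ on $B_\mu(x_0,6\ep)$, and the triangle inequality $B_\mu(z,5\ep)\subset B_\mu(x_0,6\ep)$ finishes the argument. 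The main obstacle, more organizational than mathematical, lies in coordinating the interdependence of $V_0$, $\ep$, $\lambda$, and $U$ so that all five clauses are simultaneously self-consistent; the decisive conceptual point is to fix $\lambda$ first (strictly greater than the intrinsic contraction rate) and then exploit the translation structure of branches to reduce (5) to (3) at a shifted base point, avoiding the loss incurred by direct metric comparison.
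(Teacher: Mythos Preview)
Your proposal is correct and considerably more detailed than the paper's own proof, which reads in its entirety: ``Follows from the metric properties of the hyperbolic set.'' So there is no genuine comparison to make beyond noting that you have supplied the argument the authors omitted.

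The one structural observation worth highlighting is that you have correctly isolated the mechanism behind clauses (4) and (5): since $\mathbf f_c^{-1}(w)=\{\zeta:\zeta^p=(w-c)^q\}$, every inverse branch for parameter $c$ is a Euclidean translate of a fixed branch of $\xi\mapsto\xi^{q/p}$, and hence inverse branches for different parameters are translates of one another. This is exactly what makes the operator $T_v$ meaningful and what the paper is implicitly relying on. Your reduction of (5) to (3) at the shifted base point $x_0=z+v-u$ is the right move and avoids the $\ell^2$ loss from naive metric comparison.

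One small point to tighten: when you invoke clause (3) at $x_0$, you need $x_0\in V_0$, but the lemma only assumes $z\in V_0$. Your phrase ``the margin built into $V_0$'' is doing real work here and should be made explicit: one should first fix an open $V_1$ with $\Lambda_{c_0}\subset V_1\subset\overline{V_1}\subset V_0$, establish the analogue of (3) on $V_0$, and then state (5) for $z\in V_1$ (or equivalently shrink $\ep$ so that the $\ep$-neighbourhood of the relevant points remains in $V_0$). In the paper's applications the base points $z$ lie within $\ep/3$ of $\Lambda_{c_0}$, so this causes no trouble, but as written the self-reference between (3) and (5) needs this extra layer to close cleanly.
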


\begin{proof} Follows from the metric properties of  the hyperbolic set. 
\end{proof}
 \subsection{The construction of $h_c:\mathcal{B}(\Lambda_{c_0}) \to \mathbb{C}^2$} \label{bmw} 
 In the following  proof a large role is played by a classic shadowing lemma, adapted to the purposes of this paper. (Many proofs of stability rely on shadowing arguments.)  
   Let $U$  be as in Lemma \ref{(1)} (2)  and let $T_v$ be as in  \eqref{ijk}.    Let $u\in U$ and let $\{z_i\}_{0}^{\infty} \subset V_0$ be an orbit of $\mathbf{f}_u.$ By  Lemma \ref{(1)} (3), for each $i>0$   there is a  branch $\varphi_i$ of $\mathbf{f}_{u}^{-1}$ defined on $B_{\mu}(z_i, 6\ep)$ such that $\varphi_i(z_i)=z_{i-1}.$We are going to construct a double sequence of holomorphic functions $g_{ij}: U\to B_{\mu}(z_j, \ep/3),$ with $0\leq j \leq i.$  The sequence $g_{ij}$ is inductively defined by  \begin{equation}g_{nn}(v) = z_n, \ g_{n(i-1)}(v) = T_v(\varphi_i)(g_{ni}(v)),\end{equation} for $v\in U.$  From Lemma \ref{(1)} (4), this definition makes sense because $g_{ni}(v)$ is always in the domain of $T_v(\varphi_i).$    As a matter of fact,  since $T_v(\varphi_i)$ contracts distances by  $\lambda,$
 \begin{equation}\label{xcm}d_{\mu}(g_{nj}(v), z_j) \leq \sum_{k=1}^{n-j} \lambda^{k} \ell|v-u| < \frac{\ep}{3},\end{equation}
for every  $v\in U.$ As a bounded sequence of holomorphic functions, the family $\{g_{nj}\}_n$ is normal, for each fixed $j.$ Hence there is an infinite set $\mathcal{N}_j \subset \mathbb{N}$ such that $g_{nj}(c)$ converges locally uniformly to a holomorphic function $g_j:U\to B_{\mu}(z_j, \ep/3)$ as $n\to \infty$ and $n\in \mathcal{N}_j.$ We may choose these sets so that  the first element of $\mathcal{N}_j$ is greater than the $jth$ element of $\mathcal{N}_{j-1},$ and  $\mathcal{N}_{j+1} \subset \mathcal{N}_{j}.$   Using the standard diagonal method we can select an infinite set $\mathcal{N}$ which is contained in the intersection of all $\mathcal{N}_j.$ Since \begin{equation}g_{n(j-1)}(v) \xrightarrow{\mathbf{f}_v} g_{nj}(v) \end{equation} and both sequences converge as $n\to \infty, \ n\in \mathcal{N},$ we have $ g_{j-1}(v) \xrightarrow{\mathbf{f}_v} g_j(v),$ for every $j.$ 

\begin{lem}[Shadowing]\label{sdw}{\it  The sequence $w_j=g_j(v)$ is the unique orbit of $\mathbf{f}_v$ such that $d_{\mu}(w_j, z_j) < \ep,$ for every $j.$ If $z_j$ is periodic, then so is $w_j$ (with the same period). }\end{lem}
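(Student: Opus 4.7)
The existence of such an orbit is already established by the construction of $g_j$: the identity $g_{n(i-1)}(v)=T_v(\varphi_i)(g_{ni}(v))$ passes to the limit along $\mathcal{N}$ to give $g_{j-1}(v)\xrightarrow{\mathbf{f}_v} g_{j}(v)$, and by \eqref{xcm} we have $d_\mu(w_j,z_j)=d_\mu(g_j(v),z_j)<\ep/3<\ep$. The plan is therefore to establish uniqueness and then deduce periodicity from it.

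For uniqueness, suppose $\{w_j'\}$ is any $\mathbf{f}_v$-orbit with $d_\mu(w_j',z_j)<\ep$ for every $j$. The key step is to show that the backward step from $w_j'$ to $w_{j-1}'$ is given by applying the same branch $T_v\varphi_j$ that is used in the construction of $\{w_j\}$. Since $w_j'\in B_\mu(z_j,\ep)\subset B_\mu(z_j,5\ep)$, Lemma \ref{(1)}(4) guarantees that $T_v\varphi_j(w_j')$ is defined, and by Lemma \ref{(1)}(5) it lies within distance $\lambda\ep+\lambda\ell|u-v|<\ep/2$ of $z_{j-1}$, hence within $2\ep$ of $w_{j-1}'$. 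Both $T_v\varphi_j(w_j')$ and $w_{j-1}'$ are pre-images of $w_j'$ under $\mathbf{f}_v$, and by Lemma \ref{(1)}(2) any two distinct pre-images are separated by more than $5\ep\ell$ in Euclidean distance, hence by more than $5\ep$ in the $\mu$-metric. Therefore they must coincide, i.e., $w_{j-1}'=T_v\varphi_j(w_j')$. The same reasoning applied to $\{w_j\}$ gives $w_{j-1}=T_v\varphi_j(w_j)$.

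Now the contraction in Lemma \ref{(1)}(5) yields
\begin{equation}
d_\mu(w_{j-1},w_{j-1}')=d_\mu(T_v\varphi_j(w_j),T_v\varphi_j(w_j'))\leq \lambda\, d_\mu(w_j,w_j').
\end{equation}
Iterating forward $N$ times gives $d_\mu(w_j,w_j')\leq \lambda^N d_\mu(w_{j+N},w_{j+N}')\leq 2\ep\lambda^N$, which tends to zero as $N\to\infty$. Hence $w_j=w_j'$ for every $j$, proving uniqueness. For the periodicity claim, if $\{z_j\}$ has period $p$, then $\{w_{j+p}\}$ is another $\mathbf{f}_v$-orbit satisfying $d_\mu(w_{j+p},z_j)=d_\mu(w_{j+p},z_{j+p})<\ep$; by uniqueness applied to the shifted sequence, $w_{j+p}=w_j$ for every $j$.

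The main obstacle is the branch-matching step: showing that the \emph{a priori} unknown backward step from $w_j'$ to $w_{j-1}'$ actually agrees with the specific branch $T_v\varphi_j$ coming from the construction. This is precisely where the separation estimate in Lemma \ref{(1)}(2) and the calibration of constants ($6\ep$ versus $5\ep$ versus $\ep$, and the smallness of $|u-v|$) play their role; once this identification is secured the rest is a routine telescoping contraction argument.
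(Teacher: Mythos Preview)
Your proposal is correct and follows essentially the same route as the paper: use the separation estimate of Lemma~\ref{(1)}(2) to force any $\ep$-shadowing orbit to use the same inverse branches $T_v\varphi_j$, then telescope the contraction to get $d_\mu(w_j,w_j')\le\lambda^N\cdot 2\ep\to 0$. The paper's proof is terser and omits the periodicity argument you supply at the end; your version is simply more detailed. One small numerical slip: the bound $\lambda\ep+\lambda\ell|u-v|<\ep/2$ need not hold when $\lambda$ is close to $1$, but all you actually need is that $T_v\varphi_j(w_j')$ and $w_{j-1}'$ lie within $5\ep$ of each other in $d_\mu$, which does follow from the available estimates, so the argument goes through unchanged.
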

 
\begin{proof}
If $\{\tilde{w}_j\}$ is another orbit of $\mathbf{f}_v$ with the same property, then $d_{\mu}(w_j, \tilde{w}_j) < 2\ep.$  With the help of Lemma \ref{(1)} (2),  we can prove that the points are related by the same branch: $\varphi_j(w_j)=w_{j-1}$ and $\varphi_j(\tilde{w}_j) = \tilde{w}_{j-1}.$ Hence
\begin{equation} \label{opq} d_{\mu}(w_j, \tilde{w}_j) \leq \lambda^{k} d_{\mu}(w_{j+k}, \tilde{w}_{j+k})\leq \lambda^{k}2\ep \to 0,  \  \textrm{as} \  k \to \infty.  \end{equation}
\end{proof}

\begin{defi}Let $h_c: \mathcal{B}(\Lambda_{c_0}) \to \mathbb{C}^2$ be given by  \begin{equation}\label{ios}h_c(x)=\left( g_0(c), r\sum_{k=1}^{\infty} \delta^{k-1} g_{k}(c)\right),\end{equation} where $w_j=g_j(c)$ is the unique orbit of $\mathbf{f}_c$ such that $d_{\mu}(\pi f_{c_0}^n(x), w_n) < \ep,$ for every $n.$ \end{defi}  By definition, $c\mapsto h_c(x)$ is holomorphic on $U,$ for any  $x\in \mathcal{B}(\Lambda_{c_0}).$

\subsection{The function $h_c$ is a topological conjugacy onto its image. }  The continuous dependence of the shadowing (which can be proved with an argument similar to \eqref{opq}) establishes the continuity of $h_c.$

Let $B_c= h_c(\mathcal{B}(\Lambda_{c_0})).$ The inverse $h_c^{-1}: B_c\to \mathcal{B}(\Lambda_{c_0})$  is determined by the uniqueness property of  Lemma \ref{sdw}, which makes  the definitions of $h_c^{-1}$  and $h_c$ essentially the same (but on different spaces). Hence the same continuous dependence of the shadowing used for $h_c$ also proves that $h_c^{-1}$ is continuous on $B_c.$ The uniqueness of the shadowing  and Theorem \ref{semiconjugacy} $(d)$ makes it possible to prove that $B_c$ is invariant under the bundle map $f_c,$ 
and $h_c \circ f_{c_0}= f_c \circ h_c$ on $\mathcal{B}(\Lambda_{c_0}).$  This conjugacy equation also proves that $\Lambda_c=\pi h_c\mathcal{B}(\Lambda_{c_0})$ belongs to   $\inv(A_r, c).$ By \eqref{xcm},  we have \begin{equation}\label{lfc}\disth(\Lambda_c, \Lambda_{c_0}) < \frac{\ell}{1-\lambda}|c-c_0|  < \frac{\ep}{3}, \end{equation}
 for every $c\in U.$ By Lemma \ref{(1)} (1) and (3), this establishes the continuity and hyperbolicity of $\Lambda_c$  for $c\in U.$ Since $\Lambda_c$ is hyperbolic, it makes sense to define  $\mathcal{B}(\Lambda_c).$ From the definition of $h_c,$ we have  $h_c(\mathcal{B}(\Lambda_{c_0})) \subset \mathcal{B}(\Lambda_c).$

\subsection{On the proofs of $(d), (e)$ and $(f)$ of Theorem \ref{giocco}} The Lipschitz property $(d)$ follows from \eqref{xcm} and the fact that $d_e$ and $d_{\mu}$ are equivalent on $V_0.$    Properties $(e)$ and $(f)$ follow from Lemma \ref{sdw}, with $O=V_0.$  The map $g: U\times \mathcal{B}(\Lambda_c) \to \mathbb{C}^2$ of $(f)$  is determined by  uniqueness of shadowing.

 \subsection{Uniqueness}

Assume $g,h:U\times \mathcal{B}(\Lambda_{c_0}) \to \mathbb{C}^{2}$ are two holomorphic motions satisfying Theorem \ref{giocco} $(a)$-$(d).$ Fix $x\in \mathcal{B}(\Lambda_{c_0})$ and let \begin{equation} W_x=\left\{c\in U: g_c(x)=h_c(x)\right\}.\end{equation} This set is closed and contains $c_0.$ We are going to show that $W_x$ is open with the help of Theorem \ref{semiconjugacy}.
If $z_n(c) = f_c^n(g_c(x))$ and $w_n(c)=f_c^n(h_c(x)),$ then \begin{equation}|z_i(u) - z_i(v)| \leq C_0 |u-v|, \ \ |w_i(u) - w_i(v)| \leq C'_0|u-v|, \end{equation}
for every $u,v \in U.$ Here we have used the estimate $(d)$ and the conjugacy property $(c).$ Let $u\in W_x.$ Since $d_e$ and $d_{\mu}$ are equivalent on a compact neighbourhood of $\Lambda_u,$ there is an open set $O\subset U$ containing $u$ such that $d_{\mu}(z_i(u), z_i(v)) < \ep$ and  $ d_{\mu}(w_i(u), w_i(v)) < \ep,$   for every $i\geq 0$ and  $v\in O.$ By Lemma \ref{sdw}, we have $O\subset W_x.$ This proves $W_x$ nonempty, open and closed in $U$. Since $U$ is connected, we have $W_x=U$ and $g=h.$

\nid The proof of Theorem \ref{giocco}  is complete.

\section{Acknowledgments}
 This work was partially supported by grant 2010/17397-2  S\~ao Paulo Research Foundation (FAPESP) and grant CNPq  232706/2014-0.
C.S. would like to thank Prof. Shaun Bullett and Prof. Christopher Penrose for the comments on an earlier version of these results presented in a seminar at Queen Mary University of London. 
An anonymous referee read the whole manuscript and gave us extensive comments for which we are extremely grateful.

Approximately half part of this paper was obtained from  \cite{Carlos}; the other half was  developed during C.S.'s postdoctoral stage at Imperial College London (Academic Year 2015/2016). C.S. is immensely grateful to Prof. Sebastian van Strien.

\bibliographystyle{alpha}
\bibliography{oi}

\end{document}